\newtheorem{theorem}{Theorem}[section]
\newtheorem{lemma}[theorem]{Lemma}
\theoremstyle{definition}
\newtheorem{definition}[theorem]{Definition}
\newtheorem{remark}[theorem]{Remark}
\crefname{theorem}{theorem}{theorems}
\crefname{lemma}{lemma}{lemmas}
\crefname{corollary}{corollary}{corollaries}
\crefname{proposition}{proposition}{propositions}
\crefname{definition}{definition}{definitions}
\crefname{example}{example}{examples}
\crefname{remark}{remark}{remarks}
\Crefname{theorem}{Theorem}{Theorems}
\Crefname{lemma}{Lemma}{Lemmas}
\Crefname{corollary}{Corollary}{Corollaries}
\Crefname{proposition}{Proposition}{Propositions}
\Crefname{definition}{Definition}{Definitions}
\Crefname{example}{Example}{Examples}
\Crefname{remark}{Remark}{Remarks}
\numberwithin{equation}{section}
\def\elle#1{L^{#1}}
\def\elleom#1{L^{#1}(\Omega)}
\def\Sobom#1{W^{1,#1}_0(\Omega)}
\def\SobomX#1{W^{1,#1}_0(\Omega,X)}
\def\io{\int_\Omega}
\def\norma#1#2{ \|#1 \|_{#2}}
\def\R{\mathbb{R}}
\newcommand{\abs}[1]{\left|#1\right|}
\DeclareRobustCommand{\rchi}{{\mathpalette\irchi\relax}}
\newcommand{\irchi}[2]{\raisebox{\depth}{$#1\chi$}} 
\newlist{assumption}{enumerate}{1}
\setlist[assumption,1]{label=(\roman*),ref=(\roman*)}
\crefname{assumptioni}{assumption}{assumptions}
\title[Solutions to nonlinear X-elliptic equations]{Existence and summability of solutions to nonlinear X-elliptic equations with measurable coefficients}
\author{Marco Picerni
\orcidlink{0009-0004-4364-4831}}\email{mpicerni@sissa.it}
\address{SISSA, via Bonomea 265, 34136, Trieste, Italy}
\begin{document}
	
\maketitle

\section*{Abstract}

We prove an existence result for solutions to a class of nonlinear degenerate-elliptic equations with measurable coefficients and zero Dirichlet boundary condition. The main term is given by a nonlinear operator in divergence form associated to a family of vector fields which satisfy a Poincar\'e inequality and the doubling condition.
Furthermore, we prove that the solutions satisfy a generalization of the $L^p$-regularity results which hold for the solutions to Leray-Lions type equations.\\
\textbf{MSC:} 35H20, 35J60, 35J70, 35B35, 35B45, 35B65, 35R05\\
\textbf{Keywords:} Subelliptic equations, H\"ormander vector fields, Leray-Lions operators, Degenerate elliptic equations, Regularity of solutions

\section{Introduction}

$X$-elliptic operators are a class of second order elliptic operators in divergence form introduced by Kogoj and Lanconelli in \cite{KL-Ricerche} as a generalization of the sub-Laplacian on (sub-Riemannian) Carnot groups. 
The key difference with uniformly elliptic operators lies in the fact that the ellipticity condition
\[\langle Lu,u\rangle\geq \alpha\io\abs{\nabla u}^2\quad\forall u\in C^\infty_c(\Omega),\]
where $\alpha>0$ and $\Omega$ is a bounded domain in $\R^N$ with $N\geq2$,
is relaxed by substituting the gradient term $\nabla u$ with the $X$-gradient $Xu$, which is given by a family of Lipschitz-continuous vector fields $(X_1,\dots,X_d)$, leading by the so-called $X$-ellipticity condition:
\[\langle Lu,u\rangle\geq \alpha\int_\Omega\abs{X u}^2\quad\forall u\in C^\infty_c(\Omega).\]
On the other hand, there is no regularity requirement in the coefficients of the operator, which will often be measurable.

Since then, numerous results have been established for general subelliptic operators and their associated minimization problems (see, for example, \cite{Mazzoni-Green}, \cite{Pinamonti_multiplicity}, \cite{KL-Liouville}, \cite{PallaraMeanValue}, \cite{Subelliptic-DG}).

While the work \cite{Pic-Xell-linear} addresses the regularity properties of solutions to equations involving linear $X$-elliptic operators with measurable coefficients, the aim of this paper is to provide an existence criterion for solutions to equations involving their nonlinear counterpart.

Nonlinear operators in divergence form often fail to satisfy a monotonicity property. Consider, for instance, a Carath\'eodory function $a(x,s,\xi)$ which is monotone in $\xi$ and satisfies suitable growth conditions. In this case, Boccardo and Dacorogna \cite{BoDa_monotonicity} showed that the differential operator
\begin{equation}\label{intro:pseudomon}
    \begin{aligned}
       A:  W^{1,p}_0(\Omega)&\longmapsto W^{-1,p}(\Omega)  \\
         u&\longmapsto -\operatorname{div}(a(x,u,\nabla u)) 
    \end{aligned}
\end{equation}
cannot be monotone unless $a(x,s,\xi)$ does not actually depend on $s$.
Hence, any operator as in \eqref{intro:pseudomon} with a dependence on the function $u$, does not satisfy the assumptions of the Browder–Minty surjectivity theorem, which is used to prove existence results for various elliptic PDEs involving a monotone differential operator.
The Leray-Lions theorem \cite{LL} greatly extends the class of problems to which a solution exists, such as the one in \eqref{intro:pseudomon}. 
In particular, it guarantees the existence (though not necessarily the uniqueness) of solutions to equations where the principal term is merely \emph{pseudomonotone}. This is crucial, as any (reasonable) choice of $a$ in \eqref{intro:pseudomon} induces a pseudomonotone operator.

In \Cref{Sec:existence} of this paper, we prove an extension of the Leray-Lions theorem to the case of nonlinear $X$-elliptic operators.  More precisely, we consider operators of the form 
\[X^* a(x,u,Xu),\]
where $X^*$ denotes the (formal) adjoint of $X$ as an operator from $C^\infty_c(\Omega)$ to $\left(C^\infty_c(\Omega)\right)^d$, $a$ is a Carath\'eodory function which is monotone in its last entry and satisfies suitable growth conditions (which will be stated in \Cref{Sec:existence}). On the right-hand side, we consider a term of the form
\[
f(x)+X^*F(x)
\]
with $f$ and $\abs{F}$ belonging to suitable Lebesgue spaces. We also allow for nonlinearities on the right-hand side, which leads to the problem
\begin{equation}\label{abs:problem}
    \begin{dcases}
        X^* a(x,u,Xu) = F(x,u,X u)+ X^*G(x,u) \quad &\text{in } \Omega\\
        u=0 \quad &\text{on } \partial\Omega.
    \end{dcases}
\end{equation}

Once the existence of a solution is established, we investigate its regularity properties. The works of Stampacchia \cite{StampacchiaElliptic} (in the linear case) and later by Boccardo and Gallou\"et \cite{BoccardoGallouet} (see also the book \cite{BoccardoCroce}) show that, as the summability of the terms on the right-hand side of an elliptic equation increases, the summability of the solutions increases too.
For instance, let $p\in(1,\infty)$ and $\Omega$ be a bounded domain in $\R^N$ and consider the equation
\begin{equation}
    \begin{cases}
        -\Delta_pu=f(x)\quad&\text{in }\Omega\\
        u=0&\text{on }\partial\Omega,
    \end{cases}
\end{equation}
where $\Delta_pu=\operatorname{div}(\abs{\nabla u}^{p-2}\nabla u)$ denotes the $p$-Laplace operator. In this case, there exists a weak solution $u\in\Sobom{p}$ whenever $f$ belongs to $\elleom{m}$ with $m=(p^*)^\prime$ (here, $p^*$ denotes the Sobolev exponent related to $p$). However, for $m>(p^*)^\prime$, it can be shown that $u$ belongs to $\elleom{s}$ with $s=[m^*(p-1)]^*$, which is strictly greater than $p^*$. In \Cref{sec:regularity}, we show that a similar, though possibly weaker, regularizing effect occurs in the $X$-elliptic setting: that is, an increase in the summability of the data leads to an increase in the summability of the solutions. Such improvement depends on the growth rate of the function $a(x,s,\xi)$ and on the doubling constant associated to the family of vector fields $X$. These results generalize the ones found in \cite{Pic-Xell-linear} for linear $X$ elliptic operators. 
Our approach closely follows the techniques developed in \cite{BG-rhs_measures,pLaplacian-BoccardoMerida,BocGiac-regularitynonlinearproblems}.

\section{Preliminaries}\label{Sec:prelim}

Let $\Omega$ be a bounded open subset of $\R^N$, with $N\geq2$, and let $X:=\left\{X_1, \ldots, X_d\right\}$, $d\geq2$, be a family of locally Lipschitz-continuous vector fields on $\Omega$ with coordinate representation $X_j=\left(c_{j 1}, \ldots, c_{j N}\right),\, j=1, \ldots, d$. 
As in \cite{HK-SobPoinc,KL-Liouville}, we identify the vector valued function $c_j$ with the linear first order partial differential operator
$$
X_j=\sum_{k=1}^N c_{j k} \partial_{x_k}.
$$
For any $u\in C^1_c(\Omega)$, we define $X$-gradient of $u$, as
$$
X u=\left(X_1 u, \ldots, X_d u\right) = C(x)\nabla u.
$$
Here $C(x)$ is the $d\times N$ matrix given by $C(x)=\{c_{j k}(x)\}$.

\begin{remark}
    For any function $u\in\elleom1$, the $X$-gradient is defined in the following distributional sense:
    $$\langle X_iu, \varphi\rangle = - \io u X_i\varphi\quad\forall \varphi\in C^1_c(\Omega).$$
    Moreover, $X^*$ denotes the formal adjoint of the differential operator $X$. That is, for any $\varphi\in C^1_c(\Omega)$ and $F$ belonging to $\left(\elleom1\right)^d$, we set
    \[
    \langle X^*F,\varphi\rangle=\langle F,X\varphi\rangle=\io F(x)\cdot X\varphi.
    \]
\end{remark}


We are interested in existence and regularity of solutions to
\begin{equation}\label{Problemabase}
    \begin{dcases}
        X^*a(x,u, X u) = F(x,u,X u) + X^*G(x,u)  \quad &\text{in } \Omega\\
        u=0 \quad &\text{on } \partial\Omega
    \end{dcases}
\end{equation}
where $a: \Omega \times \mathbb{R} \times \mathbb{R}^d \rightarrow \mathbb{R}^d$, is a Carath\'eodory function such that the following holds for almost every $x \in \Omega$, for every $s \in \mathbb{R}$ and for every $\xi \neq \eta \in \mathbb{R}^d$.
$$
\left\{\begin{array}{l}
    a(x, s, \xi) \xi \geq \alpha|\xi|^p \\
    |a(x, s, \xi)| \leq \beta(h(x)+\abs{s}^{p-1}+|\xi|^{p-1}) \\
    {[a(x, s, \xi)-a(x, s, \eta)](\xi-\eta)>0,}
\end{array}\right.
$$
where $\alpha, \beta$ are positive constants, $1<p<\infty$ and $h\in\elleom{p'}$. We also assume that  $$F: \Omega \times \mathbb{R} \times \mathbb{R}^d \rightarrow \mathbb{R}$$
$$G: \Omega \times \mathbb{R} \rightarrow \mathbb{R}^d$$
and are two Carath\'eodory functions such that $\abs{F(x,s,\xi)}\leq f(x)$ for some $f\in\elleom r$, $r\geq1$ and $\abs{G(x,s)}\leq g(x)$ for some $g\in\elleom q$, $q\geq 1$ 
(the values of $r$ and $q$ will be specified in \Cref{Sec:existence}).

\subsection{Geometric assumptions}

An absolutely continuous path $\gamma:[0, T] \rightarrow \Omega$ is said $X$-subunit if $\dot{\gamma}(t)=\sum_{j=1}^m \theta_j(t) X_j(\gamma(t))$, with $\sum_{j=1}^m \theta_j^2(t) \leq 1$, for almost every $t \in[0, T]$. Assuming that $\Omega$ is $X$-connected, i.e. for every $x, y \in \Omega$ there exists at least one $X$-subunit path connecting $x$ and $y$, we define 
$$d_X(x, y):=\inf \left\{T>0 \mid \exists \gamma:[0, T] \rightarrow \Omega\,  \text{ $X$-subunit such that } \gamma(0)=x, \gamma(T)=y\right\}.$$

We will make the following assumptions on the family of vector fields $X$, which have been thoroughly outlined in \cite{Mazzoni-Green}, \cite{KL-Liouville} and \cite{HK-SobPoinc}.

\begin{itemize}
    \item The Carnot-Carath\'eodory metric $\delta$ is equivalent to the euclidean norm 
    $$|y-x| \rightarrow 0 \quad \iff\quad \delta(x, y) \rightarrow 0.$$ This is true, for example, if the vector fields $X_j$ are smooth and satisfy the H\"ormander condition (also known as Lie Algebra Rank Condition), which, in this case, means that the commutators of the vector fields $X_i$ up to some length $s\leq N$ span $\R^N$.
    \item $\R^N$, equipped with the control distance and the Lebesgue measure, is a doubling space, i.e. there exists a constant $Q>0$ such that 
    \begin{equation}\label{QDoubling}
        0<\abs{B_{2r}}\leq2^Q\abs{B_r},
    \end{equation}
    where $B_r$ is the $\delta$-ball of radius $r$. Note that we can assume, without loss of generality, that $Q>2$.
    \item There exist positive constants $C_P, \nu$ such that the following $1,1$-Poincar\'e inequality holds
    \begin{equation}\label{PoincaINEQ}
        \fint_{B_r}\left|u-u_r\right| \mathrm{d} x \leq C_P r \fint_{B_{\nu r}}|X u| \mathrm{d} x, \quad \forall u \in C^1\left(\overline{B_{\nu r}}\right)
    \end{equation}
    for any $\delta$-ball $B_r$, with $u_r=:=\frac{1}{\left|B_r\right|} \int_{B_r} u \mathrm{~d} x$.
\end{itemize}


\subsection{Functional spaces}

Following \cite{HK-SobPoinc}, we define the Sobolev spaces associated to the family of vector fields $X$.

\begin{definition}
    We define the space $W^{1,p}_0(\Omega,X)$ as the closure of $C^\infty_c(\Omega)$ under the norm
    $$\norma{u}{W^{1,p}_0(\Omega,X)}=\norma{Xu}{\elleom p}.$$
\end{definition}

Here we recall some properties of these spaces. For a more thorough discussion, we refer the reader to \cite{HK-SobPoinc,KL-Liouville}.
First of all, we have the following equivalent of the Sobolev embedding theorem.
\begin{theorem}\label{SobolevINEQ}(Sobolev Inequality). Let $1\leq p<Q$, with $Q$ given by \eqref{QDoubling}. There exists a positive constant $\mathcal{S}_p$ (only depending on $C_P$, $\nu$, $Q$ and $p$) such that
    \begin{equation}\label{eq:SobolevINEQ}        
    \norma{u}{\elleom {p^*}}\leq \mathcal{S}_p\norma{Xu}{\elleom p} \quad \forall u \in W^{1,p}_0(\Omega,X),\quad p^*=\frac{p Q}{Q-p} .
    \end{equation}
\end{theorem}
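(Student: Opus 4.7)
My plan is to derive the embedding $W^{1,p}_0(\Omega, X) \hookrightarrow L^{p^*}(\Omega)$ from the weak $(1,1)$-Poincar\'e inequality \eqref{PoincaINEQ} together with the doubling condition \eqref{QDoubling}, following the classical Haj\l asz--Koskela strategy. By construction $C^\infty_c(\Omega)$ is dense in $W^{1,p}_0(\Omega, X)$, so it suffices to prove the bound for smooth, compactly supported $u$, which I extend by zero to $\R^N$. An application of H\"older's inequality immediately upgrades \eqref{PoincaINEQ} to the $(1, p)$-Poincar\'e inequality $\fint_{B_r} |u - u_r| \leq C_P\, r\, \bigl( \fint_{B_{\nu r}} |X u|^p \bigr)^{1/p}$.

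I then chain this $(1,p)$-Poincar\'e inequality over the telescoping sequence of concentric dyadic balls $B_k := B(x, 2^k)$, $k \in \Z$, at a generic point $x \in \R^N$. The Lebesgue differentiation theorem gives $u_{B_k} \to u(x)$ as $k \to -\infty$, while the compact support of $u$ combined with $|B_k| \to \infty$ (a consequence of \eqref{QDoubling}) forces $u_{B_k} \to 0$ as $k \to +\infty$. Bounding each difference $u_{B_k} - u_{B_{k+1}}$ by $(1,p)$-Poincar\'e and the doubling estimate $|B_{k+1}| \leq 2^Q |B_k|$ yields the pointwise bound
\[
|u(x)| \leq C \sum_{k \in \Z} 2^k \Big( \fint_{B(x, \nu 2^k)} |Xu|^p \Big)^{1/p}.
\]

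Next I apply Hedberg's splitting trick at each $x$: split the sum at an optimal scale $t$, controlling the small-scale contribution by $t\, M(|Xu|^p)(x)^{1/p}$ (with $M$ the Hardy--Littlewood maximal operator on the doubling metric space) and the large-scale contribution by $C\, \|Xu\|_{L^p(\Omega)}\, t^{\,1 - Q/p}$ via the lower volume bound $|B(x, r)| \gtrsim r^Q$ implicit in the doubling condition (the hypothesis $p < Q$ is what makes the tail converge). Optimizing in $t$ produces the Hedberg-type pointwise estimate
\[
|u(x)| \leq C\, \|Xu\|_{L^p(\Omega)}^{\,p/Q}\, M(|Xu|^p)(x)^{(Q - p)/(pQ)}.
\]

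The \emph{main obstacle} is upgrading this pointwise inequality from a weak- to a strong-type bound: the weak $(1,1)$-boundedness of $M$ on the doubling space only delivers $\|u\|_{L^{p^*,\infty}(\Omega)} \leq C\, \|Xu\|_{L^p(\Omega)}$, because $M(|Xu|^p)$ is not itself in $L^1$. I would handle this upgrade by a Maz'ya-style truncation argument: for each $j \in \Z$ apply the weak-type inequality to the dyadic layer $T_{2^{j+1}}(u) - T_{2^j}(u) \in W^{1,p}_0(\Omega,X)$, whose $X$-gradient is $Xu\, \chi_{\{2^j < |u| \leq 2^{j+1}\}}$, and sum the resulting dyadic level-set contributions by Cavalieri's principle to obtain the strong Sobolev inequality with constant $\mathcal{S}_p$ depending only on $C_P$, $\nu$, $Q$ and $p$.
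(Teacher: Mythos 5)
The paper does not give a proof of this theorem: it is stated as a known result from the metric-space Sobolev theory of Haj\l asz and Koskela, with the reader explicitly referred to \cite{HK-SobPoinc} (see also \cite{KL-Liouville}). There is therefore no ``paper's proof'' to compare with; the relevant proof is the one in \cite{HK-SobPoinc}, and your sketch is a reasonably faithful outline of precisely that argument --- telescope a $(1,p)$-Poincar\'e inequality over dyadic balls, dominate the resulting Riesz-potential sum by a fractional power of the maximal function via Hedberg's splitting, and pass from the ensuing weak-$L^{p^*}$ estimate to the strong one by Maz'ya's truncation over dyadic layers of $u$. You are not diverging from the paper's route so much as supplying the argument it delegates to the literature.

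Two points in the sketch nonetheless need repair. First, you argue that $u_{B_k}\to 0$ as $k\to+\infty$ because ``$|B_k|\to\infty$ is a consequence of \eqref{QDoubling}.'' Doubling only gives an \emph{upper} bound on volume growth and does not by itself force $|B_r|\to\infty$; that requires reverse doubling, which follows from doubling together with connectedness and infinite total measure, not from doubling alone. For $u$ supported in the bounded set $\Omega$ the cleaner fix is to truncate the telescope at a fixed radius $R_0$ comparable to the $d_X$-diameter of $\Omega$: since $u$ then vanishes on a definite portion of $B(x,R_0)$, the residual average $u_{B(x,R_0)}$ is controlled directly by the $(1,p)$-Poincar\'e oscillation on that ball, and the sum over scales is finite. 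Second, the lower volume bound $|B(x,r)|\gtrsim r^Q$ is not an absolute consequence of \eqref{QDoubling}; what doubling gives is the relative estimate $|B_r|\geq (r/R_0)^Q|B_{R_0}|$ for $r\leq R_0$, which suffices for the tail of Hedberg's splitting (all radii involved are $\leq R_0$) but folds the normalizing factor $|B_{R_0}|$ into the constant --- which is also why $\mathcal{S}_p$ in practice carries a dependence on $\Omega$ and $X$ beyond $(C_P,\nu,Q,p)$, as the paper itself acknowledges later in \Cref{sec:regularity}. Neither gap is conceptual; the core strategy is sound.
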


\begin{remark}
    We use the notation $p^*$, as in the classical case, to denote the Sobolev exponent. This choice is to highlight the fact that the Sobolev spaces only "see" the dimension $Q$ given by the doubling condition \eqref{QDoubling}, rather than $N$. This will have a crucial effect on the regularity of solutions to \eqref{Problemabase}.
    We also define $p_*$ as the H\"older conjugate of $p^*$, namely 
    $$p_*=\frac{pQ}{Q(p-1)+p}=\frac{Qp'}{Q+p'}.$$
\end{remark}
\begin{remark}\label{pstarinbasso}
    $\elleom{p_*}$ is a subspace of $\left(W^{1,p}_0(\Omega,X)\right)^\prime$.
\end{remark}
The Rellich-Kondrakov theorem extends to this setting as follows.
\begin{theorem}\label{RellichTH}
    If $1\leq p<Q$ and $q<p^*=\frac{Qp}{Q-p}$, any bounded sequence $(u_n)_n\subset\SobomX p$ strongly converges, up to subsequences, in $\elleom q$.
\end{theorem}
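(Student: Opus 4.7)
The plan is to upgrade weak convergence in $\SobomX p$ to strong $\elleom 1$-convergence, and then interpolate with the uniform $\elleom{p^*}$-bound coming from \Cref{SobolevINEQ} to obtain strong convergence in $\elleom q$ for every $q\in[1,p^*)$.

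By reflexivity of $\SobomX p$ I would first extract a (non-relabelled) subsequence weakly convergent to some $u\in\SobomX p$. The Sobolev inequality then gives $\norma{u_n-u}{\elleom{p^*}}\leq M$ uniformly in $n$. Once strong $\elleom 1$-convergence is established, H\"older interpolation with $\theta\in(0,1]$ defined by $1/q=\theta+(1-\theta)/p^*$ yields
\[
\norma{u_n-u}{\elleom q}\leq\norma{u_n-u}{\elleom 1}^\theta\norma{u_n-u}{\elleom{p^*}}^{1-\theta},
\]
which tends to $0$ as $n\to\infty$, giving the conclusion.

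The core step is thus the strong $\elleom 1$-convergence. For a small parameter $r>0$, I would extend the $u_n$ by zero outside $\Omega$ (legitimate since they lie in the closure of $C^\infty_c(\Omega)$) and choose a finite family $\{B_r(x_i^r)\}_{i\leq N_r}$ of CC-balls covering $\Omega$ such that the dilated balls $\{B_{\nu r}(x_i^r)\}$ have overlap multiplicity bounded only in terms of the doubling constant $Q$ and of $\nu$; such a cover exists by a Vitali-type selection built on \eqref{QDoubling}. Let $\{A_i^r\}$ be a measurable partition of $\Omega$ with $A_i^r\subset B_r(x_i^r)$ and set
\[
\tilde u_n^{(r)}=\sum_{i=1}^{N_r}(u_n)_{B_r(x_i^r)}\rchi_{A_i^r}.
\]
Summing \eqref{PoincaINEQ} over $i$ and exploiting the bounded overlap gives
\[
\norma{u_n-\tilde u_n^{(r)}}{\elleom 1}\leq C_P r\sum_{i=1}^{N_r}\int_{B_{\nu r}(x_i^r)}|Xu_n|\leq Cr\norma{Xu_n}{\elleom 1}\leq C'r,
\]
uniformly in $n$ (the last inequality using boundedness of $(u_n)$ in $\SobomX p$ and finiteness of $|\Omega|$). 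For each fixed $r$ the sequence $(\tilde u_n^{(r)})_n$ lies in the finite-dimensional space $\mathrm{span}\{\rchi_{A_i^r}\}$ and is $\elleom 1$-bounded, hence admits a convergent subsequence. A diagonal extraction along a sequence $r_k\downarrow 0$ then produces a subsequence Cauchy in $\elleom 1$.

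The main obstacle is the geometric lemma guaranteeing a cover by CC-balls of radius $r$ whose $\nu$-dilates have overlap multiplicity bounded independently of $r$. Without such uniformity, the Poincar\'e estimates could not be summed into a bound proportional to $\norma{Xu_n}{\elleom 1}$, and the $r\to 0$ diagonalization would collapse. The doubling condition \eqref{QDoubling} is precisely what enables such a Vitali-type selection; once this is in place, the remainder of the argument is routine.
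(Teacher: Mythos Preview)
The paper does not supply a proof of \Cref{RellichTH}; it is listed among the preliminaries, with the surrounding results attributed to \cite{HK-SobPoinc}, so there is no in-text argument to compare your proposal against.

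Your strategy is sound and is essentially the standard route to Rellich--Kondrachov in doubling metric spaces: reduce to $\elleom 1$-compactness via interpolation against the uniform $\elleom{p^*}$ bound from \Cref{SobolevINEQ}, and obtain $\elleom 1$-compactness by approximating with piecewise constants over a fine CC-ball cover, the approximation error being controlled by \eqref{PoincaINEQ}. The bounded-overlap Vitali cover you flag as the crux is indeed the only nontrivial geometric input, and it follows from \eqref{QDoubling} by the usual $5r$-covering argument. Two small points are worth tightening. First, the opening appeal to reflexivity of $\SobomX p$ fails at the endpoint $p=1$, which the statement allows; but you do not actually need it, since the $\elleom 1$ argument already produces a Cauchy subsequence, its limit lies in $\elleom{p^*}$ by Fatou, and the interpolation then goes through unchanged. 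Second, \eqref{PoincaINEQ} is stated for $C^1$ functions on $\overline{B_{\nu r}}$, so applying it to the zero extension of $u_n$ requires a density step; this is immediate from the definition of $\SobomX p$ as the closure of $C^\infty_c(\Omega)$.
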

We also have an equivalent of the Trudinger inequality (see \cite{HK-SobPoinc}).
\begin{theorem}(Trudinger inequality)\label{TrudingerINEQ}
    There exist two positive contants $C_1$ and $C_T$ such that, for every $u\in\SobomX Q$,
    $$\io e^{\left(C_1\frac{\abs{u}}{\norma{Xu}{\elleom Q}}\right)^\frac{Q}{Q-1}}\leq C_T.$$
\end{theorem}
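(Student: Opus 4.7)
The plan is to reduce the exponential integrability to an iterated application of the Sobolev inequality of \Cref{SobolevINEQ}. The essential intermediate step is the polynomial $L^q$-bound
\begin{equation}\label{key_est}
\|u\|_{L^q(\Omega)}\leq C\, q^{(Q-1)/Q}\,\|Xu\|_{L^Q(\Omega)},\qquad q\geq Q,
\end{equation}
with $C=C(C_P,\nu,Q,|\Omega|)$ independent of $q$. Once \eqref{key_est} is available, the statement follows by expanding the exponential into a power series and summing via Stirling's formula.

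To obtain \eqref{key_est}, I would fix $q>Q$ and pick $p=qQ/(Q+q)\in(1,Q)$, so that $p^{*}=q$ and $1/p-1/Q=1/q$. Combining the Sobolev inequality \eqref{eq:SobolevINEQ} with H\"older's inequality on the bounded set $\Omega$ yields
\[
\|u\|_{L^q(\Omega)}\leq \mathcal{S}_p\,\|Xu\|_{L^p(\Omega)}\leq \mathcal{S}_p\,|\Omega|^{1/q}\,\|Xu\|_{L^Q(\Omega)}.
\]
The factor $|\Omega|^{1/q}$ is uniformly bounded in $q$, and the main task is to show that $\mathcal{S}_p\leq C\,q^{(Q-1)/Q}$ as $p\to Q^{-}$. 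This amounts to quantifying the Haj\l{}asz--Koskela proof of \eqref{eq:SobolevINEQ}: representing $u$ via a Riesz-type potential built from the $(1,1)$-Poincar\'e inequality \eqref{PoincaINEQ} and a chaining argument over $\delta$-balls, one gets $\mathcal{S}_p\sim (Q-p)^{-(p-1)/p}$; since $Q-p=Q^{2}/(Q+q)\sim 1/q$, this delivers precisely the rate in \eqref{key_est}.

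Granted \eqref{key_est}, set $y=(C_1|u|/\|Xu\|_{L^Q})^{Q/(Q-1)}$ and expand $e^{y}=\sum_{k\geq 0}y^{k}/k!$. Integrating term by term and writing $q_k=kQ/(Q-1)$ (so that $q_k(Q-1)/Q=k$), the estimate \eqref{key_est} bounds the $k$-th summand by
\[
\frac{C_1^{q_k}}{k!\,\|Xu\|_{L^Q}^{q_k}}\int_\Omega |u|^{q_k}\,dx \leq \frac{(C\,C_1)^{q_k}\,q_k^{\,k}}{k!} = \frac{(C\,C_1)^{q_k}\,(kQ/(Q-1))^{k}}{k!}.
\]
By Stirling $k!\geq (k/e)^{k}$, this is dominated by $\bigl[(CC_1)^{Q/(Q-1)}\cdot eQ/(Q-1)\bigr]^{k}$. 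Choosing $C_1$ small enough (depending on $C$ and $Q$) so that $(CC_1)^{Q/(Q-1)}\cdot eQ/(Q-1)<1$ makes the series geometric, and its sum supplies the universal constant $C_T$.

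The main obstacle is the \emph{sharp} polynomial tracking of the Sobolev constant $\mathcal{S}_p$ as $p\to Q^{-}$: a rate $q^{\beta}$ with $\beta>(Q-1)/Q$ would cause the series above to diverge for every $C_1>0$, so the argument is tight and forces one to quantify every step of the metric-space Sobolev embedding (or, equivalently, of the Riesz-potential estimate it relies on). The remaining ingredients --- Taylor expansion, monotone convergence, Stirling --- are purely algebraic.
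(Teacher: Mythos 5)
The paper does not actually prove this theorem: it is stated as a black-box fact, with a pointer to the memoir \cite{HK-SobPoinc} (Haj\l{}asz--Koskela), where it is derived from the doubling condition and the $(1,1)$-Poincar\'e inequality. So there is no ``paper's own proof'' to compare against, only the cited source. Your strategy --- first establish $\|u\|_{L^q}\leq C\,q^{(Q-1)/Q}\|Xu\|_{L^Q}$ uniformly in $q\geq Q$ via Sobolev with a tracked constant, then Taylor-expand $e^{y}$ and sum with Stirling --- is precisely the classical Yudovich/Trudinger--Moser route, and it is also essentially how \cite{HK-SobPoinc} proceeds in the metric-measure setting. In that sense the approach is the right one.

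The gap is that the entire weight of the argument sits on the unproved claim $\mathcal{S}_p\sim (Q-p)^{-(p-1)/p}$ as $p\to Q^-$. You flag this yourself as the main obstacle, but you do not supply the quantitative chaining/Riesz-potential argument that produces it; asserting the rate and noting that any larger exponent would break the series is a diagnosis, not a proof. In the Euclidean case the rate is classical (it can be read off Talenti's sharp constant), but in the abstract doubling-plus-Poincar\'e setting of \Cref{SobolevINEQ} one has to rerun the Maz'ya truncation / representation-formula machinery of \cite{HK-SobPoinc} while keeping every $p$-dependence explicit, and this is the bulk of the work. A second, smaller point: your estimate \eqref{key_est} is stated for $q\geq Q$, but since $Q>2$ the first few exponents $q_k=kQ/(Q-1)$ in the series fall below $Q$; those finitely many terms have to be handled separately (e.g.\ by H\"older on the bounded domain together with $\|u\|_{L^Q}\lesssim\|Xu\|_{L^Q}$), which is easy but should be said. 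Modulo filling in the constant-tracking lemma, the scheme is sound.
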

If $p>Q$, a Morrey-type embedding theorem is in place (see \cite{HK-SobPoinc}).
\begin{theorem}\label{MorreyTH}
    If $u\in\SobomX p$ for some $p>Q$, then $u$ is (after redefinition on a set of measure zero) locally H\"older continuous. Moreover, we have the estimate
    $$
    |u(x)-u(y)| \leq C r_0^{Q / p} d(x, y)^{1-Q / p}\left(\int_{5 \sigma B_{r_0}} \abs{Xu}^p d \mu\right)^{\frac{1}{p}}
    $$
    for all $x, y \in B_{r_0}$, where $B_{r_0}\subset \Omega$ is a $d_X$-ball of radius $r_0$.
\end{theorem}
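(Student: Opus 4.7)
The plan is to prove the H\"older estimate by the standard telescoping argument on geometric chains of balls, the key analytic inputs being a $(1,p)$-Poincar\'e inequality and the doubling condition \eqref{QDoubling}. First I would upgrade \eqref{PoincaINEQ} to its $(1,p)$ version via H\"older's inequality, obtaining for every $u\in C^1(\overline{B_{\nu r}})$
\[
\fint_{B_r}|u-u_r|\,dx \leq C_P\,r\left(\fint_{B_{\nu r}}|Xu|^p\,dx\right)^{1/p};
\]
this is the only analytic brick of the whole proof.

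Next, fix two points $x,y\in B_{r_0}$, set $d:=d_X(x,y)$, and build two geometric chains $B_i^x:=B(x,2^{-i}d)$ and $B_i^y:=B(y,2^{-i}d)$, both meeting at the common ball $\widetilde B:=B(x,2d)$, which contains $y$. I would apply the $(1,p)$-Poincar\'e inequality to each consecutive pair in either chain; the doubling condition \eqref{QDoubling} allows one to estimate the volume factor $|B_i^x|^{-1/p}$ in terms of $|B_{r_0}|^{-1/p}(r_0/r_i)^{Q/p}$, so each link of the telescoping sum $\sum_i|u_{B_{i+1}^x}-u_{B_i^x}|$ contributes a term of order $r_i^{1-Q/p}\,r_0^{Q/p}|B_{r_0}|^{-1/p}\|Xu\|_{L^p(5\sigma B_{r_0})}$. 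Since $p>Q$, the exponent $1-Q/p$ is strictly positive and both geometric series converge, yielding at every Lebesgue point
\[
|u(x)-u_{\widetilde B}|,\;|u(y)-u_{\widetilde B}| \leq C\,d^{1-Q/p}\,r_0^{Q/p}|B_{r_0}|^{-1/p}\|Xu\|_{L^p(5\sigma B_{r_0})}.
\]
The triangle inequality eliminates the common average $u_{\widetilde B}$ and produces the stated H\"older bound with exponent $1-Q/p$, with $|B_{r_0}|^{-1/p}$ absorbed into $C$ under the standard $Q$-regularity of balls.

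The passage from smooth functions to $u\in\SobomX p$ follows by density of $C^\infty_c(\Omega)$, and the continuous representative is produced by redefinition on a null set. The main technical obstacle is not conceptual but rather the careful bookkeeping of the enlargement constants, namely $\nu$ from \eqref{PoincaINEQ}, the factor $2$ in the geometric chain and the doubling factors at each step, to certify that all integration domains encountered along the two chains stay inside $5\sigma B_{r_0}$; the doubling condition then guarantees $|5\sigma B_{r_0}|\leq C|B_{r_0}|$ with a constant depending only on $Q$ and $\nu$, so no dimensional constants are lost in the final estimate.
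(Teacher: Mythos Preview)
The paper does not prove this theorem: it is stated in the preliminaries and attributed to \cite{HK-SobPoinc} without proof. So there is no ``paper's own proof'' to compare against.

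Your sketch is the standard chaining/telescoping argument (exactly the method used in \cite{HK-SobPoinc}), and the logic is sound: upgrade \eqref{PoincaINEQ} to a $(1,p)$-Poincar\'e inequality by H\"older, run two geometric chains of balls from $x$ and $y$ to a common ball, estimate successive mean oscillations by Poincar\'e, use doubling to compare volumes, and sum the resulting geometric series with ratio $2^{-(1-Q/p)}<1$. The only point that deserves a caveat is your last sentence: absorbing $|B_{r_0}|^{-1/p}$ into the constant $C$ requires a lower volume bound of the form $|B_{r_0}|\gtrsim r_0^Q$, which is \emph{not} a consequence of the doubling inequality \eqref{QDoubling} alone (doubling gives only the reverse comparison $|B_{r_i}|^{-1}\lesssim (r_0/r_i)^Q|B_{r_0}|^{-1}$). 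In the Haj\l asz--Koskela framework this is handled either by working inside a fixed reference ball or by an explicit Ahlfors-regularity hypothesis; as the statement here is quoted verbatim from \cite{HK-SobPoinc}, the cleanest fix is simply to leave the factor $|B_{r_0}|^{-1/p}$ in the estimate (or equivalently write the right-hand side as an average) rather than claim it disappears.
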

\begin{remark}\label{Remark_AA}
    Since the space $\SobomX p$ is given by the closure of $C^\infty_c(\Omega)$ under the Sobolev norm, every function belonging to $\SobomX p$ with $p>Q$ is H\"older continuous and zero on the boundary of $\Omega$ in the classical sense. 
    Moreover, since the H\"older seminorm is controlled by the $\SobomX p$ norm, any bounded sequence in $\SobomX p$ has a converging subsequence in $\elleom\infty$ by the Ascoli-Arzelà theorem.
\end{remark}

The following result follows from the classical chain rule for Sobolev functions.

\begin{lemma}
    If $\psi$ is a Lipschitz-continuous function on $\R$ such that $\psi(0)=0$ and $u\in\SobomX p$, then $\psi(u)\in\SobomX p$ and $X\psi(u)=\psi'(u)Xu$.
\end{lemma}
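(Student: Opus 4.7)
The plan is a standard density and approximation argument that reduces the statement to the classical Euclidean Sobolev chain rule via the representation $Xu=C(x)\nabla u$ for smooth $u$.

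First, since $\SobomX p$ is by definition the closure of $C^\infty_c(\Omega)$, I would pick a sequence $(u_n)\subset C^\infty_c(\Omega)$ with $Xu_n\to Xu$ in $L^p(\Omega)^d$. The Sobolev inequality (\Cref{SobolevINEQ}) when $p<Q$, or the Trudinger and Morrey embeddings (\Cref{TrudingerINEQ,MorreyTH}) when $p\geq Q$, give $u_n\to u$ in some $L^q(\Omega)$; after extracting a subsequence one may also assume $u_n\to u$ and $Xu_n\to Xu$ pointwise almost everywhere. Since each $u_n$ belongs to the classical Sobolev space $\Sobom p$ and $\psi(u_n)$ is Lipschitz with compact support in $\Omega$, the classical Sobolev chain rule (Marcus--Mizel/Stampacchia) yields $\nabla\psi(u_n)=\psi'(u_n)\nabla u_n$ a.e., the right-hand side being set to zero on the level sets $\{u_n=c\}$ for $c$ in the null set where $\psi'$ is undefined. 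Multiplying by $C(x)$ then produces
\[
X\psi(u_n)=\psi'(u_n)\,Xu_n\quad\text{a.e. in }\Omega,
\]
and mollifying $\psi(u_n)$ in $\R^N$ exhibits it as a limit of $C^\infty_c(\Omega)$ functions, so $\psi(u_n)\in\SobomX p$.

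Next I would pass to the limit, first in $n$ and then in a mollification of $\psi$. The Lipschitz bound $|\psi(u_n)-\psi(u)|\leq L\,|u_n-u|$ gives $\psi(u_n)\to\psi(u)$ in $L^q$, so only the identification of the limit of $\psi'(u_n)\,Xu_n$ in $L^p$ remains. To bypass the discontinuity of $\psi'$, I would introduce $\psi^{\varepsilon}:=\psi*\rho^{\varepsilon}$ on $\R$, which is $C^\infty$ and $L$-Lipschitz, with $\psi^{\varepsilon}\to\psi$ uniformly and $(\psi^{\varepsilon})'\to\psi'$ a.e. on $\R$. For the smooth $\psi^{\varepsilon}$, continuity of the derivative combined with dominated convergence closes the $n\to\infty$ passage and yields $X\psi^{\varepsilon}(u)=(\psi^{\varepsilon})'(u)\,Xu$ in $L^p$. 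A second dominated-convergence argument as $\varepsilon\to 0$ would then deliver the target formula.

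The main obstacle is precisely this last passage: the convergence $(\psi^{\varepsilon})'(u(x))\to\psi'(u(x))$ for a.e. $x\in\Omega$ is not automatic, since $u$ could send a positive-measure subset of $\Omega$ into the null set of $\R$ where $(\psi^{\varepsilon})'$ fails to converge to $\psi'$. The standard remedy is a Stampacchia-type vanishing property, $Xu=0$ a.e. on $\{u=c\}$ for every $c\in\R$, inherited from the classical Stampacchia lemma applied to the smooth approximants $u_n$ and passed to the limit in $\SobomX p$. Once this is available, $Xu$ vanishes on the exceptional set so that $\psi'(u)\,Xu$ is unambiguously zero there, and dominated convergence finishes the proof.
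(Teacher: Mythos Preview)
The paper does not actually prove this lemma; it simply asserts that it ``follows from the classical chain rule for Sobolev functions.'' Your proposal is exactly the standard unpacking of that one-line claim---approximate $u$ by smooth $u_n$, use $Xv=C(x)\nabla v$ together with the Euclidean chain rule to obtain $X\psi(u_n)=\psi'(u_n)Xu_n$, and pass to the limit---so the approaches coincide and your plan is correct.

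The only step that deserves tightening is your justification of the Stampacchia-type vanishing $Xu=0$ a.e.\ on $\{u=c\}$. You write that it is ``inherited from the classical Stampacchia lemma applied to the smooth approximants $u_n$ and passed to the limit,'' but knowing $\nabla u_n=0$ on $\{u_n=c\}$ says nothing directly about $Xu$ on $\{u=c\}$, since the level sets move with $n$. A clean fix is to first run your argument only for $\psi\in C^1$ with bounded derivative and $\psi(0)=0$: there the $n\to\infty$ passage closes by dominated convergence because $\psi'$ is continuous, and no Stampacchia input is needed. Then, for fixed $c$, apply this $C^1$ case to functions $\psi_\varepsilon$ with $\psi_\varepsilon(0)=0$, $0\le\psi_\varepsilon'\le1$, $\psi_\varepsilon'\equiv1$ on $[c-\varepsilon,c+\varepsilon]$ and $\operatorname{supp}\psi_\varepsilon'\subset[c-2\varepsilon,c+2\varepsilon]$. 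One has $\norma{\psi_\varepsilon(u)}{\elleom\infty}\le 4\varepsilon\to0$ while $X\psi_\varepsilon(u)=\psi_\varepsilon'(u)Xu\to\rchi_{\{u=c\}}Xu$ in $\elleom p$ by dominated convergence, and the closedness of $X$ on $\SobomX p$ forces $\rchi_{\{u=c\}}Xu=0$. With this in hand, your $\varepsilon\to0$ step for general Lipschitz $\psi$ goes through exactly as you wrote it.
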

\begin{remark}\label{Rem:TkGk}
    We introduce the truncation function $T_k$ for $k>0$:
    \begin{equation}\label{eq_def:Tk}
        T_k(s) = \begin{dcases}
            s \quad & \abs s\leq k\\
            k \quad & s> k\\
            -k \quad & s<- k\\
        \end{dcases}
    \end{equation}
    and define
    \begin{equation}\label{eq_def:Gk}
        G_k(s)=s-T_k(s)=(\abs s-k)_+\rm{sgn}(s)
    \end{equation}
    Then, by the previous lemma, we have
    $$X(T_k(u))=Xu\rchi_{\{\abs{u}\leq k\}}\quad\text{and}\quad X(G_k(u))=Xu\rchi_{\{\abs{u}\geq k\}.}$$
\end{remark}

We conclude this section with the following characterization theorem for the dual of $\SobomX p$, which can be proved by following the classical proof outlined in \cite{Brezis}.
\begin{theorem}
    Let $1\leq p<\infty$ and $\Lambda\in\left(\SobomX p\right)^\prime$. There exists $F\in\left(\elleom{p'}\right)^d$, where $d$ is the number of components of $X$, such that
    $$\langle \Lambda, u\rangle= \io F(x)\cdot Xu=\langle X^*F,u\rangle\quad\forall u\in\SobomX{p}.$$
    Moreover, if $1<p<\infty$, $\SobomX p$ is reflexive.
\end{theorem}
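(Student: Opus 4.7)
The plan is to reduce the statement to the Riesz representation theorem on $L^p$ via an isometric embedding, exactly as in the classical Sobolev case. By definition of $\SobomX p$, the norm is $\norma{u}{\SobomX p}=\norma{Xu}{\elleom p}$, so the $X$-gradient operator
\[
T\colon \SobomX p \longrightarrow \bigl(\elleom p\bigr)^d,\qquad T(u)=Xu
\]
is a linear isometry. In particular $T$ is injective, and since $\SobomX p$ is a Banach space (being defined as a completion), the image $E:=T\bigl(\SobomX p\bigr)$ is a closed linear subspace of $\bigl(\elleom p\bigr)^d$.

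Given $\Lambda\in\bigl(\SobomX p\bigr)^\prime$, I would then define $\widetilde\Lambda\colon E\to\R$ by $\widetilde\Lambda(Xu):=\Lambda(u)$. This is well-defined and continuous on $E$ because $T$ is an isometric bijection between $\SobomX p$ and $E$; in particular $\|\widetilde\Lambda\|_{E'}=\|\Lambda\|$. The Hahn-Banach theorem produces a norm-preserving extension $\overline\Lambda\in\bigl((\elleom p)^d\bigr)^\prime$. Since $\bigl((\elleom p)^d\bigr)^\prime$ is isometrically identified with $\bigl(\elleom{p'}\bigr)^d$ via the Riesz representation theorem for $L^p$ spaces (valid for all $1\leq p<\infty$), there exists $F\in\bigl(\elleom{p'}\bigr)^d$ such that
\[
\overline\Lambda(H)=\io F(x)\cdot H(x)\,dx\qquad\forall H\in\bigl(\elleom p\bigr)^d.
\]
Specializing to $H=Xu$ with $u\in\SobomX p$ yields $\langle\Lambda,u\rangle=\io F\cdot Xu=\langle X^*F,u\rangle$, which is the claimed representation.

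For the reflexivity statement in the range $1<p<\infty$, I would invoke the classical fact that $\bigl(\elleom p\bigr)^d$ is reflexive (being a finite product of reflexive spaces), combined with the fact that a closed subspace of a reflexive Banach space is itself reflexive. Since $T$ is an isometric isomorphism between $\SobomX p$ and the closed subspace $E\subset(\elleom p)^d$, reflexivity transfers to $\SobomX p$.

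No genuine obstacle is expected here; the argument is purely functional-analytic and does not interact with the geometric assumptions on the vector fields beyond the fact, already built into the definition, that $\norma{X\cdot}{\elleom p}$ is a genuine norm on $C^\infty_c(\Omega)$ (which is ensured by the Poincaré-type inequality \eqref{PoincaINEQ} together with the Sobolev inequality of \Cref{SobolevINEQ}, preventing any non-trivial kernel). The only mildly delicate point is the well-definedness of $\widetilde\Lambda$, which however is automatic from the injectivity of the isometry $T$.
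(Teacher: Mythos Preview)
Your proposal is correct and is precisely the classical argument the paper has in mind: the paper does not spell out a proof but states that the result ``can be proved by following the classical proof outlined in \cite{Brezis}'', and your isometric embedding $u\mapsto Xu$ into $(\elleom p)^d$, followed by Hahn--Banach and Riesz representation, together with the closed-subspace-of-reflexive-space argument, is exactly that classical proof.
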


\section{Existence of a weak solution}\label{Sec:existence}

In the uniformly elliptic case (that is, when $X$ is the standard gradient), the Leray-Lions theorem \cite{LL} establishes a sufficient condition for the existence of a weak solution to differential equations in divergence form with principal term of the type \eqref{intro:pseudomon} and zero Dirichlet boundary conditions. That is, a function $u\in\Sobom{p}$ such that
$$\io a(x,u,\nabla u)\cdot\nabla v = \io F(x,u,\nabla u) v\quad\forall v\in\Sobom{p},$$
where $a$ and $F$ satisfy certain growth conditions.
The following results extends the Leray-Lions theorem to $X$-elliptic nonlinear problems such as \eqref{Problemabase}.

\begin{theorem}\label{LL-Xelliptic}
    Let $1<p<\infty$ and assume that $$a: \Omega \times \mathbb{R} \times \mathbb{R}^d \rightarrow \mathbb{R}^d$$
    $$F: \Omega \times \mathbb{R} \times \mathbb{R}^d \rightarrow \mathbb{R}$$
    $$G: \Omega \times \mathbb{R} \rightarrow \mathbb{R}^d$$
    are Carath\'eodory functions such that the following conditions hold for almost every $x \in \Omega$, for every $s \in \mathbb{R}$ and every $\xi \neq \eta \in \mathbb{R}^d$.
    \begin{assumption}
        \item \label{ass:ellipticity} $a(x, s, \xi) \xi \geq \alpha|\xi|^p$;
        \item \label{ass:grow-a} $|a(x, s, \xi)| \leq \beta(h(x)+\abs{s}^{p-1}+|\xi|^{p-1})$;
        \item \label{ass:monot} ${[a(x, s, \xi)-a(x, s, \eta)]\cdot(\xi-\eta)>0}$;
        \item \label{ass:grow-F} $\abs{F(x,s,\xi)}\leq f(x)$;
        \item \label{ass:grow-G} $\abs{G(x,s)}\leq g(x)$;
    \end{assumption}
    where $\alpha, \beta$ are positive constants, $h\in\elleom{p'}$, $r>p_*$ (if $p\leq Q$, otherwise $r=1$), $f\in\elleom{r}$, and $g\in\elleom{p'}$. 
    Then there exists a weak solution $u\in\SobomX{p}$ to \eqref{Problemabase}, namely:
    $$\io a(x,u,Xu)\cdot Xv=\io F(x,u,Xu)v +\io G(x,u)\cdot Xv \quad\forall v\in \SobomX{p}.$$
\end{theorem}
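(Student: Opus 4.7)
The plan is to adapt the classical Leray-Lions strategy to the $X$-elliptic setting. Since $\SobomX p$ is separable and reflexive, I would fix an increasing sequence $V_n=\mathrm{span}(\varphi_1,\dots,\varphi_n)$ of finite-dimensional subspaces whose union is dense in $\SobomX p$, and define $\Phi_n:V_n\to V_n^*$ by
$$ \langle\Phi_n(u),w\rangle = \io a(x,u,Xu)\cdot Xw - \io F(x,u,Xu)\,w - \io G(x,u)\cdot Xw. $$
A standard topological (Brouwer) argument, combined with the coercivity estimate described next, produces $u_n\in V_n$ with $\Phi_n(u_n)=0$. Testing the identity with $u_n$ and using \ref{ass:ellipticity}, \ref{ass:grow-F}, \ref{ass:grow-G} yields
$$ \alpha\norma{Xu_n}{\elleom p}^p \le \norma{f}{\elleom r}\norma{u_n}{\elleom{r'}} + \norma{g}{\elleom{p'}}\norma{Xu_n}{\elleom p}. $$
When $p<Q$, the hypothesis $r>p_*$ gives $r'<p^*$, so \Cref{SobolevINEQ} controls $\norma{u_n}{\elleom{r'}}$ by a multiple of $\norma{Xu_n}{\elleom p}$; the cases $p=Q$ and $p>Q$ are treated analogously using \Cref{TrudingerINEQ} and \Cref{MorreyTH}. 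Young's inequality then produces a uniform bound $\norma{Xu_n}{\elleom p}\le M$.

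By reflexivity I extract a subsequence with $u_n\rightharpoonup u$ weakly in $\SobomX p$, and \Cref{RellichTH} upgrades this to strong convergence in $\elleom q$ for every $q<p^*$ and, along a further subsequence, almost-everywhere convergence. The heart of the proof is the a.e.\ convergence of $Xu_n$. Choose $\psi_n\in V_n$ with $\psi_n\to u$ in $\SobomX p$, and test the Galerkin identity with $u_n-\psi_n\in V_n$. As $n\to\infty$: the term $\io a(x,u_n,Xu_n)\cdot(Xu-X\psi_n)$ vanishes because $a(x,u_n,Xu_n)$ is bounded in $(\elleom{p'})^d$ by \ref{ass:grow-a} and $X\psi_n\to Xu$ in $(\elleom p)^d$; the $F$-contribution $\io F(x,u_n,Xu_n)(u_n-\psi_n)$ is dominated by $\norma{f}{\elleom r}\norma{u_n-\psi_n}{\elleom{r'}}$ and vanishes thanks to the strict inequality $r>p_*$, which forces the compact embedding $\SobomX p\hookrightarrow\elleom{r'}$; the $G$-contribution vanishes because $G(x,u_n)\to G(x,u)$ strongly in $(\elleom{p'})^d$ by dominated convergence while $Xu_n\rightharpoonup Xu$ weakly in $(\elleom p)^d$ and $X\psi_n\to Xu$ strongly. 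Putting these together gives
$$ \limsup_{n\to\infty}\io a(x,u_n,Xu_n)\cdot(Xu_n-Xu)\le 0. $$
Subtracting $\io a(x,u_n,Xu)\cdot(Xu_n-Xu)$, which tends to zero because $a(x,u_n,Xu)\to a(x,u,Xu)$ strongly in $(\elleom{p'})^d$ by \ref{ass:grow-a} and dominated convergence, I obtain $\io[a(x,u_n,Xu_n)-a(x,u_n,Xu)]\cdot(Xu_n-Xu)\to 0$. Since the integrand is nonnegative by \ref{ass:monot}, the standard Leray-Lions argument based on strict monotonicity then yields $Xu_n\to Xu$ almost everywhere along a subsequence.

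With a.e.\ convergence of both $u_n$ and $Xu_n$ in hand, Vitali's theorem combined with the growth bound \ref{ass:grow-a} gives $a(x,u_n,Xu_n)\rightharpoonup a(x,u,Xu)$ weakly in $(\elleom{p'})^d$, while dominated convergence yields $F(x,u_n,Xu_n)\to F(x,u,Xu)$ in $\elleom r$ and $G(x,u_n)\to G(x,u)$ in $(\elleom{p'})^d$. Passing to the limit in $\langle\Phi_n(u_n),v\rangle=0$ for every $v\in\bigcup_n V_n$ and extending by density concludes the proof. The principal difficulty is the gradient-convergence step: the $u$-dependence of $a$ rules out direct monotone-operator arguments (this is precisely the obstruction highlighted in \eqref{intro:pseudomon}), and the coupling with the source term $F(x,u,Xu)$ and the nonlinear $G(x,u)$ requires that the compactness supplied by the three Sobolev-type regimes ($p<Q$, $p=Q$, $p>Q$) be carefully matched against the threshold $r>p_*$, without which the $F$-term would fail to vanish in the crucial limsup estimate.
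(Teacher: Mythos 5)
Your proof is correct, but it follows a genuinely different organisational route from the paper. The paper does not run a Galerkin scheme at all: it defines the operator $A:\SobomX p\to\left(\SobomX p\right)^\prime$ by
\[
\langle A(u),v\rangle = \io a(x,u,Xu)\cdot Xv - \io F(x,u,Xu)\,v - \io G(x,u)\cdot Xv,
\]
verifies directly that $A$ is bounded, coercive and pseudomonotone (the last via \Cref{LemmaLL1} and \Cref{LemmaLL2}), and then invokes the abstract surjectivity theorem for bounded coercive pseudomonotone operators on reflexive spaces. You instead unfold that abstract theorem: Brouwer on finite-dimensional Galerkin slices $V_n$, a priori bounds from coercivity, weak compactness, and then the limsup estimate obtained by testing the Galerkin identity against $u_n-\psi_n$ with $\psi_n\to u$ in $\SobomX p$. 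The technical heart is the same in both arguments --- passing from $\limsup_n\io a(x,u_n,Xu_n)\cdot X(u_n-u)\le 0$ to a.e.\ convergence of $Xu_n$ via the Nemytskii/monotonicity manipulation, then identifying the weak limit of $a(x,u_n,Xu_n)$ --- and your handling of the threshold $r>p_*$ (ensuring $u_n\to u$ strongly in $\elleom{r'}$) matches the paper's step \eqref{proof:LL:convr'}. A minor stylistic difference: where the paper subtracts $a(x,u,Xu)$ and then adds and subtracts $a(x,u_n,Xu)$ inside \Cref{LemmaLL2}, you subtract $a(x,u_n,Xu)$ directly, which reaches the strictly nonnegative integrand in \Cref{ass:monot} in one step; both are fine. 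The trade-off between the two routes is the usual one: the paper's approach is modular and delegates the limit passage to a cited abstract theorem, while yours is self-contained at the cost of reproving the Galerkin machinery. Either is acceptable, and your write-up would need only the standard care in the $p=Q$ compactness step (\Cref{RellichTH} is stated for $p<Q$; for $p=Q$ one gets compact embedding into every $\elleom q$, $q<\infty$, by embedding $\SobomX Q$ into $\SobomX p$ for $p<Q$ near $Q$), a point the paper glosses over in the same way.
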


    To prove Theorem \ref{LL-Xelliptic}, as for the classical case, we show that the operator
    \begin{equation}\label{DefOpLL}
        \begin{aligned}
       A:  \SobomX p&\longmapsto \left(\SobomX p\right)^\prime  \\
         u&\longmapsto X^* a(x,u,Xu)-F(x,u,X u)-X^*G(x,u)
        \end{aligned}
    \end{equation}
    is coercive and pseudomonotone. To that end, we need the following two lemmas (whose proofs are straightforward adaptations of the classical case, see e.g. \cite{BoccardoCroce}). 

    \begin{lemma}\label{LemmaLL1}
        Under the assumptions of Theorem \ref{LL-Xelliptic}, let $(u_n)_n$ be a sequence in $\SobomX p$ such that:
        \begin{itemize}
            \item $u_n\rightharpoonup u$ in $\SobomX p$;
            \item  $[a(x,u_n,Xu_n)-a(x,u,Xu)]\cdot X(u_n-u)\to0$ a.e. in $\Omega$.
        \end{itemize}
        Then $Xu_n\to Xu$ a.e. in $\Omega$.
    \end{lemma}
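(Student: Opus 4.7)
My plan is to reduce the a.e.\ statement to a pointwise argument at a generic point of $\Omega$, exploiting the strict monotonicity in \ref{ass:monot} together with a subsequence criterion. By the Rellich--Kondrakov theorem (\Cref{RellichTH}), weak convergence of $(u_n)$ in $\SobomX{p}$ forces strong $\elleom{p}$-convergence along a subsequence, hence a.e.\ convergence along a further subsequence; I extract such a subsequence at the outset and still denote it by $(u_n)$. Let $E \subset \Omega$ be a set of full measure on which $u_n(x) \to u(x)$, the quantity in the hypothesis tends to zero, and $h(x), |u(x)|, |Xu(x)|$ are finite. Fix $x \in E$ and abbreviate $s_n := u_n(x)$, $\xi_n := Xu_n(x)$, $s := u(x)$, $\xi := Xu(x)$; the goal is to show $\xi_n \to \xi$.

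First I would show that $(\xi_n)$ is bounded. Expanding the hypothesis gives
\[
a(x,s_n,\xi_n)\cdot\xi_n \;=\; a(x,s_n,\xi_n)\cdot\xi \,+\, a(x,s,\xi)\cdot(\xi_n-\xi) \,+\, o(1).
\]
Applying the ellipticity bound \ref{ass:ellipticity} on the left and the growth bound \ref{ass:grow-a} on the right, while noting that $(|s_n|^{p-1})$ is bounded since $s_n \to s$, yields an inequality of the form
\[
\alpha |\xi_n|^p \;\leq\; C_1 + C_2 |\xi_n|^{p-1} + C_3 |\xi_n|,
\]
with constants $C_i$ depending on $x$ but not on $n$. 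Two applications of Young's inequality (with exponents $p/(p-1),p$ and $p,p/(p-1)$ respectively) absorb both lower-order terms into $\tfrac{\alpha}{2}|\xi_n|^p$ on the left, producing $\sup_n |\xi_n| < \infty$.

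Next, consider any subsequence $\xi_{n_k} \to \xi^*$ as $k \to \infty$. By the Carathéodory continuity of $a(x,\cdot,\cdot)$ together with $s_{n_k} \to s$ and $\xi_{n_k} \to \xi^*$, one has $a(x, s_{n_k}, \xi_{n_k}) \to a(x, s, \xi^*)$, so passing to the limit in the hypothesis yields
\[
[a(x,s,\xi^*) - a(x,s,\xi)]\cdot(\xi^*-\xi) \,=\, 0,
\]
and the strict monotonicity \ref{ass:monot} forces $\xi^* = \xi$. Since every subsequence of the bounded sequence $(\xi_n)$ thus has a further subsequence converging to the same limit $\xi$, the entire sequence converges to $\xi$, i.e.\ $Xu_n(x) \to Xu(x)$ for every $x \in E$, which is a.e.\ in $\Omega$. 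I expect the only slightly delicate step to be the boundedness argument, where the $x$-dependent constants must be split and absorbed with the correct conjugate exponents; the remaining ingredients (Carathéodory continuity, strict monotonicity, and the subsequence criterion) are entirely routine.
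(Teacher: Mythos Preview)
Your proof is correct and follows essentially the same approach as the paper's: reduce to a pointwise argument at a generic $x$, establish boundedness of $(Xu_n(x))_n$ by combining the ellipticity bound \ref{ass:ellipticity} with the growth bound \ref{ass:grow-a}, and then use strict monotonicity \ref{ass:monot} together with the subsequence criterion to identify the limit. The only cosmetic differences are that you invoke Rellich--Kondrakov and Young's inequality explicitly, whereas the paper simply writes ``assume that $u_n(x)\to u(x)$'' and leaves the absorption of the lower-order terms implicit.
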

    
\begin{proof}
    Let $Z$ be a set of measure zero such that
    $$[a(x,u_n,Xu_n)-a(x,u,Xu)]\cdot X(u_n-u)\to0\quad\forall x\in\Omega\setminus Z$$
    and assume that $u_n(x)\to u(x)$ for every $x\in\Omega\setminus Z$.
    Now let $x_0\in\Omega\setminus Z$ and note that the sequence $(Xu_n(x_0))_n$ is bounded. Indeed, we have
    $$[a(x_0,u_n(x_0),Xu_n(x_0))-a(x_0,u(x_0),Xu(x_0))]\cdot (Xu_n(x_0)-Xu(x_0))\to0$$
    which implies, using the properties of $a(x,s,\xi)$, that
    \begin{equation}
        \begin{split}
            \alpha \abs{Xu_n(x_0)}^p
            \leq& a(x_0,u_n(x_0),Xu_n(x_0))\cdot Xu_n(x_0)\\
            \leq &1+\abs{a(x_0,u(x_0),Xu(x_0))}\abs{ Xu_n(x_0)} \\&+ \abs{a(x_0,u(x_0),Xu(x_0))}\abs{ Xu(x_0)}\\&+ \abs{a(x_0,u_n(x_0),Xu_n(x_0))}\abs{Xu(x_0)}\\
            \leq& C(1+\abs{ Xu_n(x_0)}^{p-1} + \abs{ Xu_n(x_0)})
        \end{split}
    \end{equation}    
    Let $b\in\R^d$ be a limit point of the sequence $(Xu_n(x_0))_n$. Taking the limit as $n\to\infty$ leads to
    $$[a(x_0,u(x_0),b)-a(x_0,u(x_0),Xu(x_0))]\cdot (b-Xu(x_0))=0,$$
    which, by monotonicity of $a(x,s,\xi)$ in the last variable, implies that $b=Xu(x_0)$, thereby concluding the proof.
\end{proof}

    \begin{lemma}\label{LemmaLL2}
        Under the assumptions of \Cref{LL-Xelliptic}, let $(u_n)_n$ be a sequence in $\SobomX p$ such that 
        \begin{itemize}
            \item $u_n\rightharpoonup u$ in $\SobomX p$;
            \item $\displaystyle\io [a(x,u_n,Xu_n)-a(x,u,Xu)]\cdot X(u_n-u)\to0.$
        \end{itemize} 
        Then 
        \begin{equation}
            a(x,u_n,Xu_n)\rightharpoonup a(x,u,Xu)\quad\text{ in }\elleom {p'}
        \end{equation}
        and 
        \begin{equation}
            [a(x,u_n,Xu_n)-a(x,u,Xu)]\cdot X(u_n-u)\to0\text{ a.e. in $\Omega$}.
        \end{equation} 
    \end{lemma}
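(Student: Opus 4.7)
The plan is to combine the monotonicity condition \ref{ass:monot}, the growth bound \ref{ass:grow-a} and the compact embedding of $\SobomX p$ into $\elleom p$ (\Cref{RellichTH}) in order to reduce everything to the hypotheses of \Cref{LemmaLL1}. I would start by splitting the integrand as
\[
[a(x,u_n,Xu_n)-a(x,u,Xu)]\cdot X(u_n-u) = I_n(x) + J_n(x),
\]
where
\[
I_n:=[a(x,u_n,Xu_n)-a(x,u_n,Xu)]\cdot X(u_n-u), \qquad J_n:=[a(x,u_n,Xu)-a(x,u,Xu)]\cdot X(u_n-u).
\]
By \ref{ass:monot} one has $I_n\geq 0$ almost everywhere, so the whole issue is to control the sign-indefinite piece $\int_\Omega J_n$.

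Next, by \Cref{RellichTH} I pass to a subsequence along which $u_n\to u$ strongly in $\elleom p$ and pointwise a.e. The growth bound \ref{ass:grow-a} yields the pointwise domination
\[
|a(x,u_n,Xu)-a(x,u,Xu)|^{p'} \leq C\bigl(h(x)^{p'}+|u_n|^p+|u|^p+|Xu|^p\bigr),
\]
whose right-hand side converges in $\elleom 1$ because $|u_n|^p\to|u|^p$ in $\elleom 1$. The generalized dominated convergence theorem then gives $a(x,u_n,Xu)\to a(x,u,Xu)$ strongly in $(\elleom{p'})^d$. Combined with $X(u_n-u)\rightharpoonup 0$ in $(\elleom p)^d$ this forces $\int_\Omega J_n\to 0$, hence, by hypothesis, also $\int_\Omega I_n\to 0$. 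Since $I_n\geq 0$, a further subsequence makes $I_n\to 0$ almost everywhere; together with $J_n\to 0$ a.e.\ this yields the second assertion
\[
[a(x,u_n,Xu_n)-a(x,u,Xu)]\cdot X(u_n-u)\to 0\quad\text{a.e.\ in }\Omega.
\]
\Cref{LemmaLL1} then upgrades this to $Xu_n\to Xu$ a.e., and the Carath\'eodory property of $a$ gives $a(x,u_n,Xu_n)\to a(x,u,Xu)$ a.e.

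For the first conclusion, the growth bound \ref{ass:grow-a} together with the boundedness of $(u_n)_n$ in $\elleom p$ and of $(Xu_n)_n$ in $(\elleom p)^d$ shows that the sequence $(a(x,u_n,Xu_n))_n$ is bounded in the reflexive space $(\elleom{p'})^d$; the almost everywhere limit identifies every weak cluster point with $a(x,u,Xu)$, and the Urysohn subsequence principle then promotes weak convergence along subsequences to weak convergence of the original sequence. The hardest part will really be the bookkeeping of the several subsequences (for the compact embedding and for turning $\elleom 1$ convergence of $I_n$ into a.e.\ convergence) so that both conclusions hold along the original sequence; apart from that, the only genuine analytic input is the generalized dominated convergence step, which lets me avoid assuming any uniform integrability of $|Xu_n|^p$.
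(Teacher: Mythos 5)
Your proposal is correct and follows essentially the same route as the paper's proof: the same decomposition into the monotone piece $I_n$ and the Nemytskii piece $J_n$, the same pairing of strong $\elleom{p'}$ convergence of $a(x,u_n,Xu)$ with weak $\elleom{p}$ convergence of $X(u_n-u)$ to make $\io J_n\to0$ (the paper invokes the Nemytskii composition theorem where you spell out a generalized dominated-convergence argument, but these are the same thing), and the same argument that a bounded sequence in $\elleom{p'}$ converging a.e.\ must converge weakly. Your explicit attention to the subsequence bookkeeping (passing from $L^1$ to a.e.\ convergence, and the Urysohn upgrade to the full sequence for the weak limit) is in fact more careful than the paper, which glosses over these points.
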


\begin{proof}
    We add and subtract $a(x,u_n,Xu)\cdot X(u_n-u)$ from 
    $$\io [a(x,u_n,Xu_n)-a(x,u,Xu)]\cdot X(u_n-u)\to0,$$
     to obtain
     \begin{equation}
         \begin{split}
             \io [a(x,u_n,Xu_n)-&a(x,u_n,Xu)]\cdot X(u_n-u) \\&+\io [a(x,u_n,Xu)-a(x,u,Xu)]\cdot X(u_n-u)\to0.
         \end{split}
     \end{equation}
    Observe that the second integral tends to zero. This is because $a(x,u_n,Xu)$ converges to $a(x,u,Xu)$ in $\elleom {p'}$ by the Nemytskii composition theorem and $X(u_n-u)\rightharpoonup 0$ in $\elleom p$. It follows that
    \begin{equation}
        \begin{split}
            \io&\abs{[a(x,u_n,Xu)-a(x,u,Xu)]\cdot X(u_n-u)}\\
            &\quad\quad\leq\norma{a(x,u_n,Xu)-a(x,u,Xu)}{\elleom {p'}}\norma{X(u_n-u)}{\elleom p}\to 0.
        \end{split}
    \end{equation}
    This implies that the first term, which is positive by monotonicity of $a(x,s,\xi)$, tends to zero. Thus  
    $$[a(x,u_n,Xu_n)-a(x,u,Xu)]\cdot X(u_n-u)\to0\quad\text{ in }\elleom1,$$
    which, by Lemma \ref{LemmaLL1}, implies that $Xu_n\to Xu$ almost everywhere. Moreover, since $(a(x,u_n,Xu_n))_n$ is a bounded sequence in $\elleom {p'}$ which converges almost everywhere to $a(x,u,Xu)$, it must also weakly converge to such limit. This concludes the proof.
\end{proof}

    We now prove Theorem \ref{LL-Xelliptic}
\begin{proof}
    We begin by proving that the operator $A$ defined in \eqref{DefOpLL} is coercive: indeed, one has
    \begin{equation*}
        \begin{split}
            \langle A(v),v\rangle
            &=\io a(x,v,Xv)\cdot Xv - \io F(x,v,Xv)v -\io G(x,v)\cdot Xv\\
            &\geq\alpha\io \abs{Xv}^p-\norma{f}{\elleom {r}}\norma{v}{\elleom {r'}}-\norma{g}{\elleom {p'}}\norma{Xv}{\elleom p}\\
            &\geq \alpha\norma{v}{\SobomX p}^p-\tilde C\norma{f}{\elleom {r}}\norma{v}{\SobomX p}-\norma{g}{\elleom {p'}}\norma{v}{\SobomX p}
        \end{split}
    \end{equation*}
    where the last step follows from \Cref{SobolevINEQ} if $p\leq Q$ (resp. \Cref{MorreyTH}, if $p>Q$).
    
    We now show that $A$ is bounded. Let $v,w\in\SobomX p$ and note that, by the properties of $a$, $F$, $G$ and the H\"older inequality:
    \begin{equation}
        \begin{split}
            \langle A(v),w\rangle & = \io a(x,v,Xv)\cdot Xw - \io F(x,v,Xv)w -\io G(x,v)\cdot Xw\\
            & \begin{split}
                \leq \beta\io h(x)\abs{Xw}+\abs{v}^{p-1}\abs{Xw}+|Xv|^{p-1}\abs{Xw}
                \\+\io f(x)\abs{w}+\io g(x)\abs{Xw}
            \end{split}\\
            &\begin{split}
                \leq\beta\left(\norma{h}{\elleom {p'}}+\norma{v}{\elleom p}^{p-1}+\norma{Xv}{\elleom p}^{p-1}\right)\norma{w}{\SobomX p}
                \\+\norma{f}{\elleom {r}}\norma{w}{\elleom {r'}}+\norma{g}{\elleom {p'}}\norma{Xw}{\elleom {p}},
            \end{split}
        \end{split}
    \end{equation}
    which implies
    \begin{equation}
        \langle A(v),w\rangle\leq C\left(1+\norma{v}{\SobomX p} ^{p-1}\right)\norma{w}{\SobomX p}
    \end{equation}
    for some positive constant $C$ depending on the data.
    
    We now prove the pseudomonotonicity of the operator $A$. Let $(u_n)_n\subset\SobomX p$ be a sequence such that
    \begin{itemize}
        \item $u_n\rightharpoonup u$ in $\SobomX p$;
        \item $\limsup_{n\to\infty}\langle A(u_n),u_n-u\rangle\leq 0$.
    \end{itemize}
    We claim that
    \begin{equation}\label{ObiettivoProofLL}
        \liminf_{n\to\infty} \langle A(u_n),u_n-w\rangle\geq \langle A(u),u-w\rangle\quad\forall w\in\SobomX p.
    \end{equation}
    Note that, if $p\leq Q$, then $u_n\to u$ in $\elleom q$ for every $p\leq q<p^*$, by \Cref{RellichTH}. On the other hand, if $p>Q$, then $u_n\to u$ in $\elleom\infty$ by Remark \ref{Remark_AA}.
    In both cases, we have
    \begin{equation}\label{proof:LL:convr'}
        u_n\to u\quad\text{in }\elleom{r'}.
    \end{equation}    
    We expand the left-hand side of \eqref{ObiettivoProofLL} to get
    \begin{equation}
        \begin{split}
            \langle A(u_n),u_n-w\rangle &=\io a(x,u_n,Xu_n)\cdot X(u_n-w) \\&- \io F(x,u_n,Xu_n)(u_n-w)-\io G(x,u_n)\cdot X(u_n-w).
        \end{split}
    \end{equation}
    First, we prove that
    \begin{equation}
        \lim_{n\to\infty} \io [a(x,u_n,Xu_n)-a(x,u,Xu)]\cdot X(u_n-u)=0.
    \end{equation}
    To this end, note that, due to \Cref{ass:grow-F,ass:grow-G},
    the sequence $\big(F(x,u_n,Xu_n)\big)_n$ is bounded in $\elleom {r}$ and $G(x,u_n)\to G(x,u)$ in $\left(\elleom{p'}\right)^d$. 
    This, by \eqref{proof:LL:convr'} and the weak convergence of $Xu_n$ to $Xu$ in $\elleom p$, leads to
    $$\abs{\io F(x,u_n,Xu_n)(u_n-u)}\leq \norma{F(x,u_n,Xu_n)}{\elleom {r}}\norma{u_n-u}{\elleom {r'}}\to0$$
    and
    $$\abs{\io G(x,u_n)\cdot X(u_n-u)}\to0.$$
    This, along with the assumption that $\limsup_{n\to\infty}\langle A(u_n),u_n-u\rangle\leq 0$, implies
    $$\limsup_{n\to\infty} \io [a(x,u_n,Xu_n)-a(x,u,Xu)]\cdot X(u_n-u)\leq0.$$
    On the other hand, by the monotonicity of $a(x,s,\xi)$ in the last variable,
    \begin{equation*}
        \begin{split}
             \io [a(x,u_n,Xu_n)-&a(x,u,Xu)]\cdot X(u_n-u)\\&\geq \io [a(x,u_n,Xu)-a(x,u,Xu)]\cdot X(u_n-u)\to 0,
        \end{split}
    \end{equation*}
    where the limit follows from the Nemytskii composition theorem for the operator $v\mapsto a(x,v,Xu)$.
    Next, we apply Lemma \ref{LemmaLL2} to establish that 
    \begin{equation}\label{proof:LL:weakconv}
        a(x,u_n,Xu_n)\rightharpoonup a(x,u,Xu)\text{ in }\elleom {p'}
    \end{equation}
    and 
    $$a(x,u_n,Xu_n)\to a(x,u,Xu)\text{ a.e. in }\Omega,$$
    which allows us to conclude, by Lemma \ref{LemmaLL1}, that $Xu_n\to Xu$ a.e. in $\Omega$. 
    In turn, an application of Fatou's Lemma yields
    $$\liminf_{n\to\infty}\io a(x,u_n,Xu_n)\cdot Xu_n\geq \io a(x,u,Xu)\cdot Xu$$
    and, by \eqref{proof:LL:weakconv}, we get
    $$\lim_{n\to\infty}\io a(x,u_n,Xu_n)\cdot Xw= \io a(x,u,Xu)\cdot Xw\quad\forall w\in\SobomX p,$$
    which leads to
    \begin{equation}\label{proof:LL:pseudomon1}
        \liminf_{n\to\infty}\io a(x,u_n,Xu_n)\cdot X(u_n-w)\geq \io a(x,u,Xu)\cdot X(u-w).
    \end{equation}
   
    We now examine the term
    $$\io F(x,u_n,Xu_n)(u_n-w).$$
    By the pointwise convergence of $u_n$ and $Xu_n$ to, respectively, $u$ and $Xu$, we know that $F(x,u_n,Xu_n)\to F(x,u,Xu)$ almost-everywhere. Additionally, since the sequence $(F(x,u_n,Xu_n))_n$ is bounded in $\elleom {r}$ (note that, if $r=1$, the sequence is precompact by the Dunford-Pettis theorem), it weakly converges to $F(x,u,Xu)$ in $\elleom {r}$.
    It follows that
    \begin{equation}\label{proof:LL:pseudomon2}
        \io F(x,u_n,Xu_n)(u_n-w) \to \io F(x,u,Xu)(u-w).
    \end{equation}

    As for the term
    $$\io G(x,u_n)\cdot X(u_n-w),$$
    it suffices to observe that $G(x,u_n)\to G(x,u)$ in $\elleom{p'}$ by Lebesgue's Theorem and that $X(u_n-w)\rightharpoonup X(u-w)$ in $\elleom p$ to conclude that
    \begin{equation}\label{proof:LL:pseudomon3}
        \io G(x,u_n)\cdot X(u_n-w) \to \io G(x,u)\cdot X(u-w).
    \end{equation}

    Putting \eqref{proof:LL:pseudomon1}, \eqref{proof:LL:pseudomon2} and \eqref{proof:LL:pseudomon3} together, we have
    $$\liminf_{n\to\infty} \langle A(u_n),u_n-w\rangle\geq \langle A(u),u-w\rangle.$$
    This proves that $A$ is a pseudomonotone operator, thereby concluding the proof of \Cref{LL-Xelliptic}.
\end{proof}

\begin{remark}
    \Cref{ass:grow-F} may be slightly relaxed: indeed, we just need the coercivity of the operator $A$ and the boundedness in $\elleom r$ (and uniform integrability, if $r=1$) of the sequence $(F(x,u_n, Xu_n))_n$. For example, if $p\leq Q$, these properties are in place under the assumption
    \begin{equation}
        \abs{F(x,s,\xi)}\leq C(f(x)+\abs{s}^\kappa+\abs{\xi}^\sigma)
    \end{equation}
    for some $\kappa<p^*$ and $\sigma<p$.
    
    We could also relax 
    \Cref{ass:grow-G}: in particular, as long as the Nemytskii operator 
    \[v\mapsto G(x,v)\]
    is continuous from $\elleom {\eta}$ to $\elleom {p'}$ for some $\eta<p^*$ (or, if $p>Q$, from $\elleom\infty$ to $\elleom {p'}$) and  
    \begin{equation}
        \norma{G(x,v)}{\elleom{p'}}\leq C\left(1+\norma{v}{\SobomX p}^\tau\right)
    \end{equation}
    for some $\tau<p$ and $C>0$, the proof of \Cref{LL-Xelliptic} works analogously (up to using the Nemytskii composition theorem to prove that $G(x,u_n)\to G(x,u)$ in $\elleom {p'}$).
\end{remark}

We conclude this section with two remarks. Recall that the classical case (that is, when $X$ is the standard gradient) is a particular case of the context we are studying.

\begin{remark}
    In \cite{BoDa_pseudomon} it was shown that, in the classical case, the monotonicity condition on $a(x,s,\xi)$ (that is, \Cref{ass:monot}) is a necessary assumption for the pseudomonotonicity of the operator $A$. Thus, Theorem \ref{LL-Xelliptic} can be considered optimal in this regard.
\end{remark}

\begin{remark}
    Theorem \ref{LL-Xelliptic} establishes the pseudomonotonicity (and not the monotonicity) of the operator $A$. It follows that, in general, we only expect the existence of a weak solution to \eqref{Problemabase}: without further assumptions, the uniqueness of such a solution (which usually comes from the monotonicity of the operator) should not be expected. 
    In fact, as  shown in \cite{BoDa_monotonicity} a Leray-Lions operator of the form
    $$A(u)=-div(b(u)\nabla u))$$
    is monotone if and only if the function $b$ is constant. 
    
    On the other hand, when considering a Carath\'eodory function $a(x,\xi)$ which satisfies the assumptions of Theorem \ref{LL-Xelliptic}, the resulting operator
    $$A(u)=X^*(a(x,Xu))$$
    is a monotone, bounded, continuous and coercive map from $\SobomX p$ to its dual. In this case, the Browder-Minty theorem implies that $A$ is a bijective map from, $\SobomX p$ to its dual. This proves both the existence and uniqueness of a solution to problems of the type
    $$
    \begin{dcases}
        X^*a(x, X u) = f(x)+X^*F(x) \quad &\text{in } \Omega\\
        u=0 \quad &\text{on } \partial\Omega
    \end{dcases}$$
    provided that $f(x)+X^*F(x)$ belongs to $\left(\SobomX p\right)^\prime$. 
\end{remark}

\section{Summability of the solutions}\label{sec:regularity}

In this section we prove that, as the summability of the terms on the right-hand side of \eqref{Problemabase} increases, the summability of any solution $u$, which exists by Theorem \ref{LL-Xelliptic}, increases too.
Moreover, since the nonlinearities on the right-hand side of \eqref{Problemabase}, in view of \Cref{ass:grow-F,ass:grow-G}, do not impact the summability of the corresponding terms, we consider the model problem
\begin{equation}
    \begin{dcases}
        X^*a(x,u, X u) = f(x)+X^*F(x) \quad &\text{in } \Omega\\
        u=0 \quad &\text{on } \partial\Omega.
    \end{dcases}
\end{equation}

Although the problem is nonlinear, we consider separately the case in which the right-hand side is a measurable function and the case in which it is a distribution in divergence form. This distinction, while not affecting the techniques used to establish the regularity results (since they can be applied to both terms simultaneously) allows us to simplify the computations and clarifies the role played by the summability of $f$ and $F$.

First, consider the case $F=0$, leading to the problem
\begin{equation}\label{Problema_solof}
    \begin{dcases}
        X^*a(x,u, X u) = f(x) \quad &\text{in } \Omega\\
        u=0 \quad &\text{on } \partial\Omega,
    \end{dcases}
\end{equation}
where $f\in\elleom{m}$, $m\geq p_*$. We claim that, as $m$ increases, the summability of $u$ increases too.

\begin{remark}
    Under these summability assumptions for $f$ and no regularity assumptions on $a(x,s,\xi)$ in the first variable, one should not expect a systematic improvement in the summability of the $X$-gradient of $u$. In the uniformly elliptic case, it was shown by Meyers in \cite{Meyers} that an increase in summability does indeed occur, but it depends on the ellipticity constant of the operator.
    On the other hand, it was proven in \cite{MingioneHeisenberg} that, if $a$ is a $C^1$ function depending only on $\xi$, a regularizing effect on $Xu$ shall be expected.
\end{remark}

\begin{remark}
    Since, for $p>Q$, the space $\SobomX{p}$ is embedded in $C^0_0(\Omega)$ (the space continuous functions with zero boundary value), which is a subspace of $\elleom{\infty}$, we will restrict our study to the case $p\leq Q$.
\end{remark}

\subsection{The case \texorpdfstring{$p<Q$}{p<Q}}

We prove the following two results, which extend the ones proved in \cite{Pic-Xell-linear} for linear $X$-elliptic operators. The first deals with highly summable data (namely, $f\in\elleom m$ with $m>\frac{Q}{p}$) and follows the technique from \cite{StampacchiaElliptic}.
\begin{theorem}\label{Theo:u_limitata}
    Let $f\in\elleom m$, with $m>\frac{Q}{p}$, and let $u$ be a weak solution to \eqref{Problema_solof}. Then $u\in\elleom\infty$ and there exists a constant $C=C(Q,p,\Omega,\alpha,m,X)$ such that
    $$
    \norma{u}{\elleom\infty} \leq C \norma f{\elleom m}^\frac{1}{p-1}.
    $$ 
\end{theorem}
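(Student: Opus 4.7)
The plan is to adapt Stampacchia's classical $L^\infty$-bound technique to the $X$-elliptic setting, using the truncation $G_k$ (from \Cref{Rem:TkGk}) as a test function and exploiting the Sobolev inequality of \Cref{SobolevINEQ} to derive a super-linear decay estimate for the level sets of $u$.

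First, I would fix $k > 0$, set $A_k := \{|u| \geq k\}$, and test the weak formulation against $v = G_k(u)$, which belongs to $\SobomX{p}$ by the chain rule lemma. Since $X G_k(u) = X u\,\rchi_{A_k}$, the ellipticity assumption \Cref{ass:ellipticity} gives
\[
\alpha \int_\Omega |XG_k(u)|^p \;\leq\; \io a(x,u,Xu)\cdot XG_k(u) \;=\; \io f\, G_k(u).
\]
On the right-hand side I would apply H\"older with exponents $m$, $p^*$ and the remaining exponent on $A_k$ (noting $1/m + 1/p^* \leq 1$ precisely when $m \geq p_* $, which is guaranteed since $m > Q/p > p_*$), and then the Sobolev inequality \eqref{eq:SobolevINEQ} to obtain
\[
\alpha\,\|XG_k(u)\|_{\elleom p}^p \;\leq\; \mathcal{S}_p\,\|f\|_{\elleom m}\,\|XG_k(u)\|_{\elleom p}\,|A_k|^{1 - \frac{1}{m} - \frac{1}{p^*}}.
\]
Dividing by $\|XG_k(u)\|_{\elleom p}^{p-1}$ and applying Sobolev once more on the left yields
\[
\|G_k(u)\|_{\elleom{p^*}} \;\leq\; C \,\|f\|_{\elleom m}^{\frac{1}{p-1}}\, |A_k|^{\frac{1}{p-1}\left(1 - \frac{1}{m} - \frac{1}{p^*}\right)},
\]
with $C = C(\alpha,\mathcal{S}_p,p)$.

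Next, for $h > k$ I would use the standard inclusion $A_h \subset A_k$ together with $G_k(u) \geq h-k$ on $A_h$ to get $(h-k)|A_h|^{1/p^*} \leq \|G_k(u)\|_{\elleom{p^*}}$, which combined with the above estimate gives the Stampacchia-type recurrence
\[
|A_h| \;\leq\; \frac{C^{p^*}\,\|f\|_{\elleom m}^{\frac{p^*}{p-1}}}{(h-k)^{p^*}}\;|A_k|^{\gamma},\qquad \gamma \;=\; \frac{p^*}{p-1}\left(1 - \frac{1}{m} - \frac{1}{p^*}\right).
\]
A short algebraic check shows that $\gamma > 1$ is equivalent to $m > Q/p$, precisely the hypothesis. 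Thus the classical lemma of Stampacchia (see e.g.\ \cite{StampacchiaElliptic}, or the presentation in \cite{BoccardoCroce}) applies and yields the existence of some
\[
k_0 \;=\; C'\,\|f\|_{\elleom m}^{\frac{1}{p-1}}\,|\Omega|^{\frac{\gamma-1}{p^*\gamma}\cdot\frac{\gamma}{\gamma-1}}
\]
with $|A_{k_0}| = 0$, which is exactly the desired $L^\infty$-bound.

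The only step that is not entirely routine is verifying that the choice $v = G_k(u)$ is admissible and that the ellipticity computation goes through under the mere Carath\'eodory/growth assumptions on $a$; this reduces to invoking the chain rule lemma of \Cref{Rem:TkGk} together with assumption \Cref{ass:ellipticity}. Aside from this, the argument is a direct transcription of Stampacchia's scheme, with the key observation that the threshold $m > Q/p$ naturally replaces the classical $m > N/p$, since the Sobolev inequality in this setting only sees the homogeneous dimension $Q$.
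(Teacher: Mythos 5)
Your proof is correct and follows essentially the same Stampacchia scheme as the paper: testing with $G_k(u)$, combining ellipticity with the Sobolev inequality to get a super-linear recurrence for the level sets $|A_k|$, checking that the exponent exceeds $1$ exactly when $m > Q/p$, and invoking Lemma~\ref{LemmaStampacchiaLinfty}. The only differences are cosmetic — you perform a single three-exponent Hölder split (with $m$, $p^*$, and the remainder) where the paper first uses the conjugate pair $(p_*,p^*)$ and then a second Hölder to pass from $L^{p_*}$ to $L^m$, but the resulting exponent is identical (one checks $\frac{p^*}{p-1}\bigl(\frac{1}{p_*}-\frac{1}{m}\bigr)=\frac{m-p_*}{m}\cdot\frac{Q+p'}{Q-p}$) — plus a harmless slip in the phrase \emph{dividing by} $\|XG_k(u)\|_{L^p}^{p-1}$ (one divides by $\|XG_k(u)\|_{L^p}$ and raises to the power $\frac1{p-1}$) and a miscancellation in the final $|\Omega|$ exponent, neither of which affects the conclusion.
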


If we consider data belonging to the dual of $\SobomX p$, but not satisfying the assumptions of \Cref{Theo:u_limitata} (that is, if $p_*\leq m < \frac{Q}{p}$), we can still expect an improvement in the summability of the solution $u$, as the next theorem (which follows the techniques from \cite{BoccardoGallouet}) shows. 
\begin{theorem}\label{theo:Stampacchia-p**}
    Let $f\in\elleom m$, with $p_*=\frac{pQ}{Q(p-1)+p} \leq m<\frac{Q}{p}$ and let $u$ be a weak solution to \eqref{Problema_solof}. Then $u$ belongs to $\elleom {s}$, with 
    \[s=\frac{m(p-1)Q}{Q-m p}=[m^*(p-1)]^*,\]
    and there exists a constant $C=C(Q,p,\Omega,\alpha,m,X)$ such that
    \begin{equation}
    \norma{u}{\elleom s} \leq C \norma f{\elleom m}^\frac{1}{p-1}.
    \end{equation} 
\end{theorem}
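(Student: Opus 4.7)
The plan is to adapt the Boccardo-Gallou\"et test-function method to the $X$-elliptic setting. For a parameter $\gamma \geq 1$ to be fixed below and for $k>0$, the truncation $T_k(u)$ belongs to $\SobomX{p}$ by \Cref{Rem:TkGk}, and since the map $t \mapsto |t|^{\gamma-1}t$ is of class $C^1$ on $\R$ and vanishes at the origin, the chain rule implies that $v_k := |T_k(u)|^{\gamma-1}T_k(u)$ lies in $\SobomX{p}$ with $X v_k = \gamma|T_k(u)|^{\gamma-1}\chi_{\{|u|\leq k\}}Xu$. Substituting $v_k$ into the weak formulation and using \Cref{ass:ellipticity} gives
\[
\alpha\gamma\int_\Omega |T_k(u)|^{\gamma-1}|XT_k(u)|^p \leq \int_\Omega f(x)\, |T_k(u)|^{\gamma-1}T_k(u).
\]

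Setting $w_k := |T_k(u)|^{(\gamma-1)/p}T_k(u)$, the chain rule again yields $w_k \in \SobomX{p}$ with $|Xw_k|^p = \bigl(\tfrac{\gamma+p-1}{p}\bigr)^p |T_k(u)|^{\gamma-1}|XT_k(u)|^p$. Applying the Sobolev inequality \Cref{SobolevINEQ} to $w_k$ and using H\"older's inequality on the right-hand side of the previous display, we obtain
\[
C_1\, \norma{T_k(u)}{\elleom{t}}^{\gamma+p-1} \leq \norma{f}{\elleom{m}}\, \norma{T_k(u)}{\elleom{\gamma m'}}^{\gamma},
\]
where $t := (\gamma+p-1)p^*/p$ and $C_1$ depends on $\alpha, p, \gamma, \mathcal{S}_p$. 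The decisive algebraic step is to choose $\gamma$ so that $t=\gamma m'$; a direct computation shows that this common value must equal $s=\frac{mQ(p-1)}{Q-mp}=[m^*(p-1)]^*$, corresponding to $\gamma = s/m' = s(m-1)/m$, and the requirement $\gamma \geq 1$ (needed for the chain rule and the admissibility of $v_k$) holds precisely when $m \geq p_*$.

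With this choice the previous inequality becomes $C_1\norma{T_k(u)}{\elleom{s}}^{\gamma+p-1} \leq \norma{f}{\elleom{m}} \norma{T_k(u)}{\elleom{s}}^{\gamma}$. Since $T_k(u) \in \elleom{\infty}$ and $\Omega$ is bounded, both norms are finite and we may divide to obtain $\norma{T_k(u)}{\elleom{s}}^{p-1} \leq C\norma{f}{\elleom{m}}$ with $C$ independent of $k$. Letting $k\to\infty$ and applying Fatou's lemma on the left yields the claimed estimate for $\norma{u}{\elleom{s}}$. The main technical subtlety is precisely this a priori integrability issue: one cannot directly test with $|u|^{\gamma-1}u$, because nothing initially guarantees that such a function lies in $\SobomX{p}$; the truncations $T_k$ serve both to furnish admissible test functions at each level and to make the division by $\norma{T_k(u)}{\elleom{s}}^{\gamma}$ rigorous, producing estimates uniform in $k$ that survive the limit.
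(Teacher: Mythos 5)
Your proof is correct and follows essentially the same strategy as the paper's: test with a power of the truncation $T_k(u)$, lift the left side via the Sobolev inequality, apply H\"older on the right, and tune the auxiliary exponent so that the two resulting $L$-norms of $T_k(u)$ agree. The only difference is cosmetic: you parameterize the test function as $|T_k(u)|^{\gamma-1}T_k(u)$ while the paper uses $|T_k(u)|^{p(\gamma-1)}T_k(u)$, so your $\gamma$ equals $p(\gamma_{\text{paper}}-1)+1$ and your $w_k$ is the paper's $|T_k(u)|^{\gamma_{\text{paper}}-1}T_k(u)$; both choices lead to the same test function, the same exponent-matching condition, the same threshold $m\geq p_*$ for admissibility, and the same value $s=[m^*(p-1)]^*$.
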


\begin{remark}
    Note that $\frac{Q}{p}$ is strictly greater than $p_*$ as long as $1<p<Q$.
\end{remark}
\begin{remark}
    The constant $C$ in Theorems \ref{Theo:u_limitata} and \ref{theo:Stampacchia-p**} (see also \Cref{Theo:u_limitata_div} and \Cref{theo:Stampacchia-p**_div}) depends on the domain $\Omega$ and the family of vector fields $X$ through the Sobolev constant $\mathcal{S}_p$ appearing in \eqref{eq:SobolevINEQ}.
\end{remark}

To prove the first result,  we will use the following lemma from \cite{StampacchiaElliptic}.

\begin{lemma}\label{LemmaStampacchiaLinfty}
    Let $\psi: \mathbb{R}^{+} \rightarrow \mathbb{R}^{+}$be a nonincreasing function such that
    $$
    \psi(h) \leq \frac{M \psi(k)^\delta}{(h-k)^\gamma}, \quad \forall\, h>k \geq 0,
    $$
    for some $M>0, \delta>1$ and $\gamma>0$. Then there exists $d\in\R$ satisfying
    $$
    d^\gamma=M \psi(0)^{\delta-1} 2^{\frac{\delta \gamma}{\delta-1}},
    $$
    such that $\psi(d)=0$.
\end{lemma}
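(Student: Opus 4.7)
The plan is to use a standard iteration argument along a geometric sequence of thresholds that increases up to $d$. Specifically, I would set $k_n := d(1 - 2^{-n})$ for $n \geq 0$, so that $k_0 = 0$, $k_n \nearrow d$, and $k_{n+1} - k_n = d \, 2^{-(n+1)}$. Applying the assumed decay inequality with $h = k_{n+1}$ and $k = k_n$ directly yields the recursion
\[
\psi(k_{n+1}) \leq \frac{M \, 2^{\gamma(n+1)}}{d^\gamma}\, \psi(k_n)^\delta.
\]

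The key step is then to establish by induction on $n$ the geometric decay $\psi(k_n) \leq \psi(0)\, r^n$, where I would choose $r := 2^{-\gamma/(\delta-1)}$, which lies in $(0,1)$ since $\gamma > 0$ and $\delta > 1$. The base case $n=0$ is immediate. For the inductive step, substituting the hypothesis into the recursion and asking for the target bound at step $n+1$ reduces, after canceling the $n$-dependent factors, to the algebraic condition
\[
d^\gamma \;\geq\; M\,\psi(0)^{\delta-1}\, 2^{\gamma\delta/(\delta-1)},
\]
which is exactly the equality assumed on $d$ in the statement. This explains why the exponent $\delta\gamma/(\delta-1)$ arises: it is precisely the value that makes the blow-up factor $2^{\gamma(n+1)}$ in the recursion compatible with the ansatz $\psi(0) r^n$.

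To conclude, since $r < 1$ we obtain $\psi(k_n) \to 0$ as $n \to \infty$. Because $\psi$ is nonincreasing and $k_n \nearrow d$, one has $\psi(d) \leq \psi(k_n)$ for every $n$, whence $\psi(d) = 0$, as required.

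I do not expect a genuine obstacle in this argument; the only delicate point is choosing the ratio $r$ and the threshold sequence $(k_n)$ so that the induction closes with equality in the algebraic condition above. Any other natural choice would either fail to guarantee $\psi(k_n) \to 0$ or would force a strictly larger value of $d$, which would weaken the conclusion.
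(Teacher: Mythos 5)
Your proof is correct: the recursion along $k_n = d(1-2^{-n})$ with the ansatz $\psi(k_n)\leq \psi(0)\,2^{-\gamma n/(\delta-1)}$ closes exactly because the $n$-dependent factors cancel, leaving precisely the defining relation for $d^\gamma$. The paper does not reprove this lemma but cites it from Stampacchia, and your argument is the standard iteration proof given there, so there is nothing to flag beyond the trivial degenerate case $\psi(0)=0$ (where $d=0$ and the conclusion is immediate).
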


\begin{proof}[Proof of Theorem \ref{Theo:u_limitata}] Here, we follow \cite{StampacchiaElliptic}.
    Let $k \geq 0$ and define $A_k=\{x \in \Omega:|u(x)| \geq k\}$.
    We choose $v=G_k(u)$ (where $G_k$ is the function defined in \eqref{eq_def:Gk}) as test function in \eqref{Problemabase}. By Remark \ref{Rem:TkGk}, \Cref{ass:ellipticity}, and since $G_k(u)=0$ outside of $A_k$, we have
    $$
    \alpha \int_{A_k} \left| X G_k(u)\right|^p 
    \leq 
    \io a(x,u,X u)\cdot  Xu \rchi_{A_k}
     = \io f G_k(u)
    =
    \int_{A_k} f G_k(u).
    $$
    Using the \Cref{SobolevINEQ} (on the left-hand side) and the H\"older inequality (on the right-hand side), we have
    \begin{equation}
        \frac{\alpha}{\mathcal{S}_p^p}\left(\int_{A_k}\left|G_k(u)\right|^{p^*}\right)^{\frac{p}{p^ *}} \leq
    \left(\int_{A_k}\abs{f}^{p_ *}\right)^{\frac{1}{p_*}} \left(\int_{A_k}\left|G_k(u)\right|^{p^*}\right)^{\frac{1}{p^*}} .
    \end{equation}
    If $G_k(u)=0$ a.e., the proof is complete. Otherwise, we simplify equal terms to get
    \begin{equation}\label{proof:stimaLinfty:eq1}        
    \frac{\alpha}{\mathcal{S}_p^p} \left(\int_{A_k}\left|G_k(u)\right|^{p^*}\right)^\frac{p-1}{p^*} 
    \leq
    \left(\int_{A_k}\abs{f}^{p_ *}\right)^{\frac{1}{p_*}}.
    \end{equation}
    Since $$\frac{1}{p_*}\frac{p^*}{p-1}=\frac{Q+p'}{Qp'}\frac{Qp}{Q-p}\frac{1}{p-1}=\frac{Q+p'}{Q-p},$$
    we get, by rearranging the exponents in \eqref{proof:stimaLinfty:eq1}, 
    $$
    \int_{A_k}\left|G_k(u)\right|^{p^*}
    \leq
    C \left(\int_{A_k}\abs{f}^{p_ *}\right)^\frac{Q+p'}{Q-p}
    $$
    for some positive constant $C$ depending on $p,\alpha,Q$ and $\mathcal{S}_p$.
    Since $f$ belongs to $\elleom m$, with $m>\frac{Q}{p}\geq p_*=\frac{Qp'}{Q+p'}$, we use the H\"older inequality once again to get
    \begin{equation}\label{proof:stimaLinfty:eq2}  
        \int_{A_k}\left|G_k(u)\right|^{p^*} \leq C\left(\|f\|_{\elleom m}^{p_*} \abs{A_k}^{1-\frac{p_*}{m}} \right)^\frac{Q+p'}{Q-p}.
    \end{equation}
    We now want to apply Lemma \ref{LemmaStampacchiaLinfty} to the function $\psi(k)=\abs{A_k}$: let $h>k$, so that $A_h \subseteq A_k$, and $G_k(u) \geq h-k$ on $A_h$. By \eqref{proof:stimaLinfty:eq2}, we get
    $$
    (h-k)^{p^*} \abs{A_h} \leq
    C\left(\|f\|_{\elleom m}^{p_*} \abs{A_k}^{\frac{m-p_*}{m}} \right)^\frac{Q+p'}{Q-p},
    $$
    which implies
    $$
    \abs{A_h} \leq C \|f\|_{\elleom m}^{{p_*}\frac{Q+p'}{Q-p}} \frac{\abs{A_k}^{\frac{m-p_*}{m}\frac{Q+p'}{Q-p}}}{(h-k)^{p^*}} .
    $$
    Now we define 
    $$
    M=C \|f\|_{\elleom m}^{{p_*}\frac{Q+p'}{Q-p}}, \quad \gamma=p^*, \quad \delta=\frac{m-p_*}{m}\frac{Q+p'}{Q-p}
    $$
    and observe that the assumption $m>\frac Q p$ is equivalent to $\delta>1$. Thus $\psi(k)=\abs{A_k}$ satisfies the assumptions of Lemma \ref{LemmaStampacchiaLinfty}.
    
    We thus get that $\abs{A_d}=0$, where
    $$
    d=C(Q, p, \alpha,m,\mathcal{S}_p) \|f\|_{\elleom m}^{{p_*}\frac{Q+p'}{Q-p}\frac{1}{p^*}}.
    $$
    Since $\abs{A_d}=0$, we have $|u| \leq d$ a.e., which is
    $$
    \|u\|_{\elle{\infty}(\Omega)} \leq d=C(Q, p, \alpha,m,\mathcal{S}_p)\|f\|_{\elleom m}^\frac{1}{p-1},
    $$
    thereby concluding the proof.
\end{proof}

\begin{proof}[Proof of Theorem \ref{theo:Stampacchia-p**}]
    Let $k\geq0$, $\gamma>1$ and choose $$T_k (u) \abs{T_k (u)}^ {p(\gamma-1)}$$
    as a test function in \eqref{Problema_solof}, leading to
    $$[p(\gamma-1)+1]\io a(x,u, X  u)\cdot X T_k(u)\abs{T_k (u)}^ {p(\gamma-1)}\leq
    \io \abs{f(x)}\abs{T_k (u)}^ {p(\gamma-1)+1}.$$
    By \Cref{ass:ellipticity}, we have
    $$\alpha [p(\gamma-1)+1] \io \abs{ X  T_k(u)}^p\abs{ T_k(u)}^{(\gamma-1)p}\leq
    \io \abs{f(x)}\abs{T_k (u)}^ {p(\gamma-1)+1},$$
    which is
    $$\alpha \frac{[p(\gamma-1)+1]}{\gamma^p} \io \abs{ X  T_k(u)^\gamma}^{p}
    \leq
    \io \abs{f(x)}\abs{T_k (u)}^ {p(\gamma-1)+1}.$$
    Applying \Cref{SobolevINEQ}, we get
    $$\alpha \frac{[p(\gamma-1)+1]}{\mathcal{S}_p^p\gamma^p} \left(\io \abs{T_k(u)}^{\gamma p^*}\right)^\frac{p}{p^*}
    \leq
    \io \abs{f(x)}\abs{T_k (u)}^ {p(\gamma-1)+1},$$
    thus, applying the H\"older inequality to the right-hand side, we obtain:
    $$\alpha \frac{[p(\gamma-1)+1]}{\mathcal{S}_p^p\gamma^p} \left(\io \abs{T_k(u)}^{\gamma p^*}\right)^\frac{p}{p^*}
    \leq \norma{f}{\elleom m}\left(\io\abs{T_k (u)}^ {[p(\gamma-1)+1]m'}\right)^\frac{1}{m'}.$$
    
    Now we choose $\gamma$ such that
    $$\gamma p^*=[p(\gamma-1)+1]m',$$
    which is
    $$\gamma=\frac{m(Q-p)}{p'(Q-mp)}.$$
    Note that $\gamma\geq 1$ if and only if $m\geq p_*=\frac{Qp'}{Q+p'}$, which is our assumption. Moreover, we have
    $$\gamma p^*=\frac{(p-1)mQ}{Q-mp}=[m^*(p-1)]^*.$$
    This leads to
    $$\alpha \frac{[p(\gamma-1)+1]}{\mathcal{S}_p^p\gamma^p} 
    \left(\io \abs{T_k(u)}^{[m^*(p-1)]^*}\right)^{\frac{p}{p^*}-\frac{1}{m'}}
    \leq \norma{f}{\elleom m},$$
    which, since $\frac{p}{p^*}-\frac{1}{m'}=\frac{p-1}{[m^*(p-1)]^*}$, implies
    $$\norma {T_k(u)}{\elleom{[m^*(p-1)]^*}}\leq C\norma{f}{\elleom m}^\frac{1}{p-1}. $$
    It now suffices to apply Fatou's Lemma as $k\to\infty$ to conclude the proof.    
\end{proof}

Now, we consider the problem 
\begin{equation}\label{Problema_solodiv}
    \begin{dcases}
        X^*a(x,u, X u) = X^*F(x) \quad &\text{in } \Omega\\
        u=0 \quad &\text{on } \partial\Omega,
    \end{dcases}
\end{equation}
with $F\in\left(\elleom{m}\right)^d$, $m\geq p'$, we have the following two results. For the sake of brevity, we omit their proofs, being identical to the ones of, respectively, \Cref{Theo:u_limitata} and \Cref{theo:Stampacchia-p**}. In this case, to control the contribution of $Xu$ on the right-hand side, one should first apply the H\"older inequality, simplify the terms and then apply \Cref{SobolevINEQ} to the left-hand side.

\begin{theorem}\label{Theo:u_limitata_div}
    Let $F\in\left(\elleom{m}\right)^d$, with $m>\frac{Q}{p-1}$, and let $u$ be a weak solution to \eqref{Problema_solodiv}. Then $u\in\elleom\infty$ and there exists a constant $C=C(Q,p,\Omega,\alpha,m,X)$ such that
    $$
    \norma{u}{\elleom\infty} \leq C \norma F{\elleom m}^\frac{1}{p-1}.
    $$ 
\end{theorem}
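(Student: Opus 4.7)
The plan is to repeat the Stampacchia-type argument from the proof of \Cref{Theo:u_limitata}, but, as the author suggests in the paragraph preceding the statement, with the order of H\"older and Sobolev inequalities swapped: since the source is now a distribution in divergence form, the natural test function $v=G_k(u)$ produces the term $\io F\cdot XG_k(u)$, which must first be controlled via H\"older against $\norma{XG_k(u)}{\elleom p}$ before we can take advantage of the Sobolev embedding.

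First I would test the equation with $v=G_k(u)$ (recall $XG_k(u)=Xu\,\rchi_{A_k}$ where $A_k=\{\abs{u}\geq k\}$). By \Cref{ass:ellipticity} the left-hand side is bounded from below by $\alpha\int_{A_k}\abs{XG_k(u)}^p$, while H\"older gives
\[
\io F\cdot XG_k(u)\leq\left(\int_{A_k}\abs{F}^{p'}\right)^{\!1/p'}\!\left(\int_{A_k}\abs{XG_k(u)}^p\right)^{\!1/p}\!.
\]
Cancelling the common factor $\bigl(\int_{A_k}\abs{XG_k(u)}^p\bigr)^{1/p}$ yields $\int_{A_k}\abs{XG_k(u)}^p\leq\alpha^{-p'}\int_{A_k}\abs{F}^{p'}$. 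Only at this point I apply \Cref{SobolevINEQ} to the left-hand side, obtaining
\[
\left(\int_{A_k}\abs{G_k(u)}^{p^*}\right)^{\!p/p^*}\leq C\int_{A_k}\abs{F}^{p'}.
\]
The assumption $m>\frac{Q}{p-1}$ implies $m>p'$ (using $p<Q$), so a second H\"older with exponent $m/p'$ gives $\int_{A_k}\abs{F}^{p'}\leq\norma{F}{\elleom m}^{p'}\abs{A_k}^{1-p'/m}$.

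Next, for $h>k\geq 0$ I would use $A_h\subseteq A_k$ and $G_k(u)\geq h-k$ on $A_h$ to deduce
\[
(h-k)^{p^*}\abs{A_h}\leq C\,\norma{F}{\elleom m}^{p'p^*/p}\,\abs{A_k}^{\delta},\qquad \delta:=\frac{(m-p')p^*}{mp}=\frac{m-p'}{m}\cdot\frac{Q}{Q-p}.
\]
A short computation shows $\delta>1$ if and only if $mp>p'Q$, i.e.\ $m>\frac{Q}{p-1}$, which is precisely our hypothesis. Hence $\psi(k):=\abs{A_k}$ satisfies the assumptions of \Cref{LemmaStampacchiaLinfty} with $\gamma=p^*$, $M=C\norma{F}{\elleom m}^{p'p^*/p}$, and the conclusion $\psi(d)=0$ gives $\norma{u}{\elleom\infty}\leq d$ with $d$ proportional to $\norma{F}{\elleom m}^{p'/p}=\norma{F}{\elleom m}^{1/(p-1)}$, as required.

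I do not anticipate a genuine obstacle: the argument is structurally identical to the one for \Cref{Theo:u_limitata}. The only point requiring care is the algebraic verification that the exponent $\delta$ exceeds $1$ precisely when $m>\tfrac{Q}{p-1}$, which explains the sharp threshold appearing in the statement and mirrors the role played by $m>\tfrac{Q}{p}$ in the non-divergence case.
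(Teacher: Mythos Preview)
Your proposal is correct and follows precisely the approach the paper itself prescribes in the paragraph preceding the statement (first apply H\"older to control $XG_k(u)$, cancel, then apply the Sobolev inequality), so there is nothing to add. The algebraic verification that $\delta=\tfrac{m-p'}{m}\cdot\tfrac{Q}{Q-p}>1$ iff $m>\tfrac{Q}{p-1}$ is exactly the point the paper has in mind, and your extraction of the exponent $1/(p-1)$ from the Stampacchia bound is correct.
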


\begin{theorem}\label{theo:Stampacchia-p**_div}
    Let $F\in\left(\elleom{m}\right)^d$, with $p'\leq m<\frac{Q}{p-1}$ and let $u$ be a weak solution to \eqref{Problema_solodiv}. Then $u\in\elleom {q}$, with 
    \[q=\frac{m(p-1)Q}{Q-m p}=[m(p-1)]^*,\]
    and there exists a constant $C=C(Q,p,\Omega,\alpha,m,X)$ such that
    $$
    \norma{u}{\elleom q} \leq C \norma F{\elleom m}^\frac{1}{p-1}.
    $$ 
\end{theorem}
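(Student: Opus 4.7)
The plan is to mirror the proof of \Cref{theo:Stampacchia-p**}, adapting it to the divergence datum following the hint given after the statement: one first applies Hölder to absorb the $XT_k(u)$ factor coming from $F\cdot Xv$ into the left-hand side, and only afterwards invokes the Sobolev embedding.

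Concretely, I would begin by testing the weak formulation with $v=T_k(u)|T_k(u)|^{p(\gamma-1)}$ for a parameter $\gamma>1$ to be chosen. Writing $Xv=[p(\gamma-1)+1]|T_k(u)|^{p(\gamma-1)}\,XT_k(u)$, the ellipticity assumption \Cref{ass:ellipticity} bounds the left-hand side from below by
\[
\alpha\,[p(\gamma-1)+1]\io |XT_k(u)|^p |T_k(u)|^{p(\gamma-1)}.
\]
On the right-hand side I would split the weight as $|T_k(u)|^{p(\gamma-1)}=|T_k(u)|^{(\gamma-1)p/p'}\cdot|T_k(u)|^{\gamma-1}$ and apply Hölder with exponents $p'$ and $p$, isolating $(\io|F|^{p'}|T_k(u)|^{p(\gamma-1)})^{1/p'}$ times $(\io|XT_k(u)|^{p}|T_k(u)|^{p(\gamma-1)})^{1/p}$. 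Absorbing the second factor into the left-hand side gives the clean bound
\[
\io |XT_k(u)|^p |T_k(u)|^{p(\gamma-1)} \leq \frac{1}{\alpha^{p'}}\io |F|^{p'}|T_k(u)|^{p(\gamma-1)}.
\]

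Next I would rewrite the left-hand integral via the chain rule as $\gamma^{-p}\io |X(|T_k(u)|^\gamma)|^{p}$ and apply \Cref{SobolevINEQ} to produce a lower bound of the form $C\gamma^{-p}\|T_k(u)\|_{L^{\gamma p^*}(\Omega)}^{\gamma p}$. On the right I would apply Hölder with exponents $m/p'$ and $(m/p')'$, yielding $\|F\|_{\elleom m}^{p'}\bigl(\io |T_k(u)|^{p(\gamma-1)m/(m-p')}\bigr)^{(m-p')/m}$. I would then choose $\gamma$ to equalise the two exponents of $|T_k(u)|$, i.e.\ $\gamma p^*=p(\gamma-1)m/(m-p')$, which produces a unique $\gamma\geq 1$ precisely when $m\geq p'$ (the endpoint $m=p'$ giving $\gamma=1$, the classical energy estimate). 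A routine simplification shows that $\gamma p^*=[m(p-1)]^*$, the target exponent.

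At this point the estimate reads $\Phi^{A}\leq C\|F\|_{\elleom m}^{p'}\,\Phi^{B}$ with $\Phi:=\|T_k(u)\|_{L^{\gamma p^*}(\Omega)}^{\gamma p^*}$, and a direct computation (using $p'/m-p/Q=(Q-m(p-1))/(mQ(p-1))$) shows that $(A-B)\,\gamma p^*=p$, so that cancellation yields $\|T_k(u)\|_{L^{[m(p-1)]^*}(\Omega)}\leq C\|F\|_{\elleom m}^{1/(p-1)}$. Finally, an application of Fatou's Lemma as $k\to\infty$ gives the stated bound on $u$.

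The main obstacle is purely bookkeeping: one must track exponents through two nested Hölder applications and verify that the choice of $\gamma$ is both admissible ($\gamma\geq 1$ iff $m\geq p'$) and produces exactly the advertised Sobolev conjugate $[m(p-1)]^*$. A minor care point is to treat separately the (trivial) case in which $\io|XT_k(u)|^p|T_k(u)|^{p(\gamma-1)}=0$, in order to legitimately absorb powers of that quantity from both sides of the inequality.
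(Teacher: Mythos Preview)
Your proposal is correct and follows exactly the approach the paper indicates (the paper omits the proof, pointing to \Cref{theo:Stampacchia-p**} with the instruction to apply H\"older first to absorb the $X T_k(u)$ factor, simplify, and only then use \Cref{SobolevINEQ}---precisely what you do). One minor slip: in your computation you write $p'/m-p/Q=(Q-m(p-1))/(mQ(p-1))$, which is missing a factor of $p$; since your conclusion $(A-B)\gamma p^*=p$ is nonetheless correct, this is just a typo.
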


\begin{remark}
    In \cite{Pic-Xell-linear}, the existence and uniqueness of a suitable class of solutions to the linear elliptic problem
    $$X^*(A(x)Xu)=f(x),$$
    for general $L^1$ (or measure) data was proved. In this paper, we have only discussed the existence of a solution for data belonging to the dual space of $\SobomX p$. However, we point out that the inequality
    $$m^*(p-1)\geq 1,$$
    which is required for the exponent $[m^*(p-1)]^*$ to be well-defined,
    can hold under much more relaxed assumptions than $m\geq p_*$. This hints that the theory of solutions found by approximation (see \cite{pLaplacian-BoccardoMerida}, \cite{BG-rhs_measures}, \cite{BoccardoGallouet}) and the one of entropy solutions to elliptic equations (\cite{LavoroB6}) may be extended to this subelliptic setting.
\end{remark}

\begin{remark}
    Note that the estimates we proved in this section \say{become worse} as $Q$ increases. More precisely, while $Q$ may be chosen to be arbitrarily large in \eqref{QDoubling}, better results are obtained for smaller values of $Q$. This is consistent with the fact that, in Riemannian Carnot Groups, the optimal $Q$ increases as the number of steps needed to satisfy the H\"ormander condition increases (see \cite[Proposition 11.15]{HK-SobPoinc}), which, in turn, leads to a more degenerate second order operator.
    In particular, in the uniformly elliptic case (when $X=\nabla$ and $Q=N$), the best estimate is given by Theorem 2.1 of \cite{pLaplacian-BoccardoMerida}, which is known to be sharp.
\end{remark}

\subsection{The case \texorpdfstring{$p=Q$}{p=Q}}\label{Sec:Regularity_p=Q}

While the assumption $p>Q$ immediately yields, by \Cref{MorreyTH}, the H\"older continuity of the solution $u$ to \eqref{Problema_solof} (provided that the datum $f$ belongs, at least, to $\elleom 1$), the borderline case $p=Q$ requires a separate analysis. In this setting, \Cref{TrudingerINEQ} ensures that any function in $\SobomX Q$ has exponential integrability. 

Moreover, under the assumption that $f \in \elleom m$ for some $m>1$, we can prove the existence and the boundedness of the solution, as shown by the following theorem.

\begin{theorem}\label{theo:p=Q:bounded}
    Let $p=Q$. Let $f$ belong to $\elleom m$ for some $m>1$. Then there exists a weak solution $u$ to \eqref{Problema_solof}. Moreover, $u\in\elleom\infty$.
\end{theorem}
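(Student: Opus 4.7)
The statement has two parts, which I would address separately. For the existence part, observe that when $p=Q$ one has
\begin{equation*}
p_*=\frac{Qp'}{Q+p'}=1,
\end{equation*}
so the hypothesis $f\in\elleom m$ with $m>1$ fits exactly into the framework of \Cref{LL-Xelliptic} (take $F(x,s,\xi):=f(x)$ and $G\equiv 0$, so that assumption \ref{ass:grow-F} holds with $r=m>1=p_*$). A direct application of \Cref{LL-Xelliptic} then produces a weak solution $u\in\SobomX Q$ of \eqref{Problema_solof}.

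For the boundedness part, my plan is to run the Stampacchia scheme of \Cref{Theo:u_limitata}, replacing the Sobolev exponent $p^*$ (which is unavailable when $p=Q$) by an arbitrary finite exponent $q$. The replacement is legitimate because, expanding the exponential in \Cref{TrudingerINEQ} and using $e^x\geq x^n/n!$, one obtains that for every $q\in[1,\infty)$ there exists $C_q>0$ such that
\begin{equation*}
\norma{v}{\elleom q}\leq C_q\norma{Xv}{\elleom Q}\qquad\forall v\in\SobomX Q.
\end{equation*}
For $k\geq 0$ set $A_k=\{|u|\geq k\}$ and test \eqref{Problema_solof} with $v=G_k(u)$; by \Cref{ass:ellipticity} and \Cref{Rem:TkGk} one gets
\begin{equation*}
\alpha\io |XG_k(u)|^Q\leq \int_{A_k}|f||G_k(u)|.
\end{equation*}
Applying H\"older with $(m,m')$, then H\"older on $A_k$ with $(q/m',(q/m')')$ to generate the factor $|A_k|^{1/m'-1/q}$, and finally the embedding above, I expect to derive
\begin{equation*}
\norma{XG_k(u)}{\elleom Q}^{Q-1}\leq C\norma{f}{\elleom m}\,|A_k|^{\frac{1}{m'}-\frac{1}{q}}.
\end{equation*}
Since $G_k(u)\geq h-k$ on $A_h$ for $h>k$, the same embedding gives $(h-k)|A_h|^{1/q}\leq C_q\norma{XG_k(u)}{\elleom Q}$, and combining these two estimates yields the Stampacchia-type inequality
\begin{equation*}
|A_h|\leq\frac{M\,|A_k|^{\delta}}{(h-k)^{q}},\qquad \delta=\frac{1}{Q-1}\!\left(\frac{q}{m'}-1\right),
\end{equation*}
with $M$ proportional to $\norma{f}{\elleom m}^{q/(Q-1)}$.

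The crux is the tuning of $q$: the condition $\delta>1$ that \Cref{LemmaStampacchiaLinfty} requires is equivalent to $q>Qm'$, and since $m>1$ forces $m'<\infty$, such a $q$ is always available. \Cref{LemmaStampacchiaLinfty} then yields a finite $d$ with $|A_d|=0$, i.e.\ $\norma{u}{\elleom\infty}\leq d$. I expect the main obstacle to be bookkeeping rather than anything conceptual: one must check that the threshold for $\delta>1$ coincides precisely with $m>1$, so that the borderline case $p=Q$ inherits a boundedness result analogous to \Cref{Theo:u_limitata} (where the analogous threshold is $m>Q/p$) without any loss except the impossibility of taking $m=1$.
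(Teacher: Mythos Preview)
Your proposal is correct and follows essentially the same Stampacchia scheme as the paper: test with $G_k(u)$, use the embedding $\SobomX Q\hookrightarrow\elleom q$ for arbitrarily large $q$, and tune $q$ so that the exponent in \Cref{LemmaStampacchiaLinfty} exceeds $1$. The only cosmetic difference is the source of that embedding: you extract it from \Cref{TrudingerINEQ} via $e^x\geq x^n/n!$, whereas the paper picks an auxiliary $p\in(1,Q)$ and chains the $p$-Sobolev inequality with H\"older ($\|XG_k(u)\|_{\elleom p}\leq C\|XG_k(u)\|_{\elleom Q}$), letting $p\to Q^-$ to make $p^*$ as large as needed---the resulting level-set inequality and the threshold $q>Qm'$ (equivalently $m>1$) coincide with yours.
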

\begin{proof}
    First, we point out that $f$ belongs to the dual of $\SobomX Q$. This follows from the fact that $\SobomX Q$ continuously embeds into $\elleom q$ for every $q<\infty$. The existence of $u$ thus follows from Theorem \ref{LL-Xelliptic}.
    Now let $k>0$ and choose $G_k(u)$ as a test function in \eqref{Problema_solof}. Remark \ref{Rem:TkGk} and \Cref{ass:ellipticity} lead to:
    $$\alpha\io \abs{XG_k(u)}^Q\leq \io a(x,u,Xu)\cdot XG_k(u) \leq\int_{\abs{u}\geq k}\abs{f(x)}\abs{G_k(u)}.$$
    Let $p$ be such that $1<p<Q$ and $m>p_*$: by the Sobolev embedding and the H\"older inequality, we have
    $$\norma{G_k(u)}{\elleom{p^*}}\leq C\norma{X G_k(u)}{\elleom p}\leq C\norma{X G_k(u)}{\elleom Q},$$
    which means
    $$\left(\io \abs{G_k(u)}^{p^*}\right)^{\frac{Q}{p^*}}\leq \alpha C \io \abs{XG_k(u)}^Q \leq C \left(\int_{\abs{u}\geq k}\abs{f(x)}^{p_*}\right)^\frac{1}{p_*} \norma{G_k(u)}{\elleom {p^*}}.$$

    $$\left(\io \abs{G_k(u)}^{p^*}\right)^{\frac{Q-1}{p^*}}\leq C \left(\int_{\abs{u}\geq k}\abs{f(x)}^{p_*}\right)^\frac{1}{p_*}\leq 
    C\norma{f}{\elleom m} \abs{\{\abs{u}\geq k\}}^{\frac{1}{p_*}-\frac{1}{m}}$$
    Let $h>k>0$ and define $\psi(k)= \abs{\{\abs{u}\geq k\}}$. Repeating the steps of the proof of Theorem \ref{Theo:u_limitata}, we obtain:
    $$(h-k)^{Q-1} \psi(h)^\frac{Q-1}{p^*}\leq C\norma{f}{\elleom m} \psi(k)^{\frac{1}{p_*}-\frac{1}{m}},$$
        that is
    $$\psi(h)\leq C\frac{\psi(k)^{\frac{p^*}{Q-1}\left(\frac{1}{p_*}-\frac{1}{m}\right)}}{(h-k)^{p^*}}.$$
    To apply Lemma \ref{LemmaStampacchiaLinfty}, we only need to check that $\frac{p^*}{Q-1}\left(\frac{1}{p_*}-\frac{1}{m}\right)>1$. This, however, is always possible, since 
    $$\lim _{p\to Q^-}\frac{p^*}{Q-1}\left(\frac{1}{p_*}-\frac{1}{m}\right) =+\infty.$$
    It follows that there exists $d>0$ such that $\psi(d)=0$, thus $\norma{u}{\elleom\infty}<+\infty.$    
\end{proof}

Analogously, if we consider the problem \eqref{Problema_solodiv}, we have the following result.
\begin{theorem}
    Let $F\in\left(\elleom m\right)^d$ for some $m>Q'$ and let $u$ be a weak solution to \eqref{Problema_solodiv} (which exists by \Cref{LL-Xelliptic}). Then $u\in\elleom\infty$.
\end{theorem}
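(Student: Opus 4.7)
My plan is to mirror the argument used in the proof of \Cref{theo:p=Q:bounded}, but with a single application of H\"older on the right-hand side rather than two. First, I would fix $k>0$ and test the equation with $G_k(u)\in\SobomX Q$. Using \Cref{ass:ellipticity} and \Cref{Rem:TkGk}, together with the fact that $XG_k(u)$ is supported on $\{\abs u\geq k\}$, I would get
\[
\alpha\io\abs{XG_k(u)}^Q\leq\io a(x,u,Xu)\cdot XG_k(u)=\int_{\{\abs u\geq k\}} F(x)\cdot XG_k(u).
\]
A H\"older estimate on the right-hand side with exponents $Q'$ and $Q$, followed by simplification, would yield
\[
\norma{XG_k(u)}{\elleom Q}^{Q-1}\leq C\,\norma{F\rchi_{\{\abs u\geq k\}}}{\elleom {Q'}}\leq C\,\norma{F}{\elleom m}\,\abs{\{\abs u\geq k\}}^{\frac1{Q'}-\frac1m},
\]
where the last inequality uses $m>Q'$ and H\"older again.

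Next, exactly as in the proof of \Cref{theo:p=Q:bounded}, I would trade $\elleom Q$ control of the $X$-gradient for $\elleom{p^*}$ control of $G_k(u)$ at subcritical exponents. Pick any $p$ with $1<p<Q$; since $\Omega$ is bounded and $Q\geq p$, H\"older gives $\norma{XG_k(u)}{\elleom p}\leq C\norma{XG_k(u)}{\elleom Q}$, and \Cref{SobolevINEQ} yields $\norma{G_k(u)}{\elleom{p^*}}\leq C\norma{XG_k(u)}{\elleom p}$. Combining with the previous step,
\[
\norma{G_k(u)}{\elleom{p^*}}^{Q-1}\leq C\,\norma{F}{\elleom m}\,\abs{\{\abs u\geq k\}}^{\frac1{Q'}-\frac1m}.
\]

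Finally, for $h>k$ one has $G_k(u)\geq h-k$ on $\{\abs u\geq h\}$, which bounds $\norma{G_k(u)}{\elleom{p^*}}^{p^*}$ from below by $(h-k)^{p^*}\abs{\{\abs u\geq h\}}$. Setting $\psi(k)=\abs{\{\abs u\geq k\}}$, I would then arrive at
\[
\psi(h)\leq C\,\frac{\psi(k)^{\,\delta}}{(h-k)^{p^*}},\qquad \delta=\frac{p^*}{Q-1}\left(\frac1{Q'}-\frac1m\right),
\]
and invoke \Cref{LemmaStampacchiaLinfty} to conclude $\psi(d)=0$ for some finite $d$, hence $u\in\elleom\infty$.

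The only nontrivial point, which is the same obstacle resolved in \Cref{theo:p=Q:bounded}, is verifying $\delta>1$. Since $m>Q'$, the factor $\frac1{Q'}-\frac1m$ is a strictly positive constant, while $p^*=\frac{pQ}{Q-p}\to+\infty$ as $p\to Q^-$. Hence $\delta$ can be made larger than $1$ by choosing $p$ sufficiently close to $Q$, and this freedom in the choice of $p$ is exactly what makes the borderline case $p=Q$ work. The rest of the argument is a direct transcription of the Stampacchia-type scheme already deployed in \Cref{Theo:u_limitata} and \Cref{theo:p=Q:bounded}, so I would simply state the result and refer to those proofs for the routine details.
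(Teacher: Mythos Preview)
Your argument is correct and essentially coincides with the paper's proof. The only cosmetic difference is that the paper handles the term $\int F\cdot XG_k(u)$ via Young's inequality (absorbing $\tfrac{\alpha}{2}\io|XG_k(u)|^Q$ into the left-hand side) rather than H\"older with exponents $Q$ and $Q'$; both routes produce the same exponent $\delta=\tfrac{p^*}{Q}\bigl(1-\tfrac{Q'}{m}\bigr)=\tfrac{p^*}{Q-1}\bigl(\tfrac{1}{Q'}-\tfrac{1}{m}\bigr)$ in the Stampacchia iteration, so the choice of $p$ close to $Q$ works identically.
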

\begin{proof}
    This proof follows the procedure used to prove \Cref{Theo:u_limitata_div}, with a slight alteration in the use of the Sobolev embedding, as in the proof of \Cref{theo:p=Q:bounded}.
    Let $k>0$ and choose $G_k(u)$ as a test function in \eqref{Problema_solodiv}. Remark \ref{Rem:TkGk} and \Cref{ass:ellipticity} lead to:
    $$\alpha\io \abs{XG_k(u)}^Q\leq \io a(x,u,Xu)XG_k(u) \leq \io F(x)\cdot XG_k(u)$$
    which, using the Young inequality on the right-hand side, yields
    \begin{equation}\label{proof:p=Q:boundedDIV:1}
        \alpha\io \abs{XG_k(u)}^Q \leq C\int_{\abs{u}\geq k}\abs{F(x)}^{Q'} +\frac{\alpha}{2}\io \abs{XG_k(u)}^Q
    \end{equation}
    for some positive constant $C$ depending on $\alpha$ and $Q$.
    As before, we choose $p\in(1,Q)$. Then, by \Cref{SobolevINEQ} and the H\"older inequality, we have
    $$\norma{G_k(u)}{\elleom{p^*}}\leq \mathcal{S}_p\norma{X G_k(u)}{\elleom p}\leq C(\mathcal{S}_p,\Omega,p,Q)\norma{X G_k(u)}{\elleom Q},$$
    which, together with \eqref{proof:p=Q:boundedDIV:1}, leads to
    $$\left(\io \abs{G_k(u)}^{p^*}\right)^\frac{Q}{p^*}\leq C \int_{\abs{u}\geq k}\abs{F(x)}^{Q'}$$
    for some positive constant $C=C(\mathcal{S}_p,\Omega,p,Q,\alpha)$.
    Next, we apply the H\"older inequality to the last term with exponent $\frac{m}{Q'}$
    $$\left(\io \abs{G_k(u)}^{p^*}\right)^\frac{Q}{p^*}\leq C \norma{F}{\elleom m}^{Q'}\abs{\{\abs{u}\geq k\}}^{1-\frac{Q'}{m}}.$$
    Let $h>k>0$ and define $\psi(k)= \abs{\{\abs{u}\geq k\}}$. As before, we obtain:
        $$(h-k)^Q \psi(h)^\frac{Q}{p^*}\leq C \norma{F}{\elleom m}^{Q'} \psi(k)^{1-\frac{Q'}{m}},$$
        that is
        $$\psi(h)\leq C\frac{\psi(k)^{\left(1-\frac{Q'}{m}\right)\frac{p^*}{Q}}}{(h-k)^{p^*}}.$$
        To apply Lemma \ref{LemmaStampacchiaLinfty}, we need to check that $\left(1-\frac{Q'}{m}\right)\frac{p^*}{Q}>1$. This, however, is always possible, since $1-\frac{Q'}{m}\in(0,1)$ and $$\lim _{p\to Q^-}\frac{p^*}{Q} = \lim _{p\to Q^-}\frac{p}{Q-p}\to+\infty.$$
        It follows that there exists $d>0$ such that $\psi(d)=0$, thus $\norma{u}{\elleom\infty}<+\infty.$   
\end{proof}

\begin{remark}
    Any measurable function $f$ such that $f(x)\log(1+\abs{f(x)})\in\elleom1$ belongs to the dual of $\SobomX Q$. 
\end{remark}
\begin{proof}    
    Using the Young inequality
    \begin{equation}
        ab\leq a\log(1+a)+e^b\quad\forall a,b\geq0
    \end{equation}
    to estimate $\io vf(x)$ for some $v\in\SobomX Q$, we have
    \begin{equation}
        \begin{split}
            \abs{\io vf(x)}&\leq \frac{1}{C_1}\norma{Xv}{\elleom Q}\io C_1\frac{\abs{v}}{\norma{Xv}{\elleom Q}}\abs{f(x)}\\
            &\leq \left(\io \abs{f(x)}\log(1+\abs{f(x)}) + \io e^{C_1\frac{\abs{v}}{\norma{Xv}{\elleom Q}}}\right),
        \end{split}
    \end{equation}
    where $C_1$ is the constant appearing in \Cref{TrudingerINEQ}.
    Now we apply \Cref{TrudingerINEQ} as follows: for simplicity, we define $y=C_1\frac{\abs{v}}{\norma{Xv}{\elleom Q}}$
    $$\io e^{y}\leq \int_{y\leq 1} e^{y} + \int_{y\geq 1} e^{y}\leq e\abs{\Omega} + \int_{y\geq 1} e^{y^\frac{Q}{Q-1}}
    \leq e\abs{\Omega} + \io e^{y^\frac{Q}{Q-1}}\leq e\abs{\Omega} + C_T$$
    we thus have
    $$\abs{\io vf(x)}\leq{\norma{Xv}{\elleom Q}}\left(\norma{f\log(1+\abs{f})}{\elleom1}+e\abs{\Omega}+C_T\right),$$
    which proves that $f\in\left(\SobomX Q\right)^\prime.$
\end{proof}
        
    We conclude by pointing out that, as was shown in \cite{Boccardo-NLapl}, the requirement 
    \[f\log(1+\abs{f})\in\elleom1\]
    is not sufficient to ensure the boundedness of the solution $u$ to
    \begin{equation}
    \begin{dcases}
        -\operatorname{div} (\abs{\nabla u}^{N-2}\nabla u)=f(x) \quad &\text{in } \Omega\\
        u=0 \quad &\text{on } \partial\Omega.
    \end{dcases}
\end{equation}
    whenever $N>2$. The optimal assumptions on $f$ under which the solution $u$ to \eqref{Problema_solof} belongs to $\elleom\infty$ are still unclear, even in the uniformly elliptic case.
    For related results in dimension $2$, we refer the reader to \cite{AlbericoFerone}, where symmetrization techniques are employed to establish sufficient conditions for the boundedness of solutions to the Laplace equation.

\section*{Acknowledgements}
I thank Luigi Orsina for his valuable feedback and his help in revising this manuscript.
I am also grateful to Marco Bramanti and Ermanno Lanconelli for kindly answering my questions regarding the existing literature on $X$-elliptic operators.\\
I declare no conflicts of interest. \\
I am a member of the GNAMPA group of INdAM. 

	\newpage

\end{document}